\theoremstyle{plain} 
\newtheorem{theorem}{\indent\sc Theorem}[section]
\newtheorem{lemma}[theorem]{\indent\sc Lemma}
\newtheorem{corollary}[theorem]{\indent\sc Corollary}
\newtheorem{proposition}[theorem]{\indent\sc Proposition}
\theoremstyle{definition} 
\newtheorem{definition}[theorem]{\indent\sc Definition}
\newtheorem{remark}[theorem]{\indent\sc Remark}
\DeclareMathOperator{\vol}{vol}%
\DeclareMathOperator{\Div}{div}%
\DeclareMathOperator{\Hess}{Hess}%
\DeclareMathOperator{\Ric}{Ric}%
\begin{document}

\title[The connectivity of a manifold supporting an $L^{q,p}$-Sobolev inequality]{The connectivity at infinity of a manifold and  $L^{q,p}$-Sobolev inequalities}

\author{Stefano Pigola}
\author{Alberto G. Setti}
\author{Marc Troyanov}

\subjclass[2010]{ 
53C21, 31C12.
}
\keywords{ 
Ends of manifolds, Sobolev inequalities, Ricci curvature, $L_{q,p}$-cohomology.
}
\thanks{Work partially supported by the Italian GNAMPA and the Swiss NSF
}
\address{Dipartimento di Scienza e Alta Tecnologia - Sezione di Matematica\endgraf
Universit\`a dell'Insubria - Como\endgraf
via Valleggio 11\endgraf
I-22100 Como, ITALY}
\email{stefano.pigola{@}uninsubria.it}

\address{Dipartimento di Scienza e Alta Tecnologia - Sezione di Matematica\endgraf
Universit\`a dell'Insubria - Como\endgraf
via Valleggio 11\endgraf
I-22100 Como, ITALY}
\email{alberto.setti{@}uninsubria.it}

\address{Section de Math\'ematiques\endgraf
 \'Ecole Polytechnique F\'ederale de Lausanne
 \endgraf station 8
 \endgraf 1015 Lausanne, Switzerland}
\email{marc.troyanov{@}epfl.ch}


\maketitle

\begin{abstract}
The purpose of this paper is to give a self-contained proof that a complete manifold with more than
one end never supports an $L^{q,p}$-Sobolev inequality ($2 \leq p$, $q\leq
p^{*}$), provided the negative part of its Ricci tensor is small (in a
suitable spectral sense). In the route, we discuss potential theoretic
properties of the ends of a manifold enjoying an $L^{q,p}$-Sobolev inequality. \\
\noindent  Keywords: Ends of manifolds, Sobolev inequalities, Ricci curvature, $L_{q,p}$-cohomology. \\
\noindent 2010 AMS Mathematics Subject Classification: 53C21, 31C12
\end{abstract}

\section*{Introduction}

{A classic subject in Riemannian geometry is the study of the interplay
between curvature bounds } and the topology of the underlying space. If, on
the one hand, this interplay can take the form of a control of certain global
topological invariants such as the homotopy or the homology groups of the
manifold, on the other hand it can be visible in a control of the complexity
at infinity of the space, e.g., the number of its ends. Recall that an end of
a complete Riemannian manifold {$\left(  M,g\right)  $} with respect to a
selected compact set $K$ is any of the unbounded connected components of
$M\backslash K$. Clearly, by enlarging $K$, the corresponding number of ends
$n(K)$ increases and if $n(K)$ is constantly equal to $1$ we say that $M$ is
connected at infinity. It is a well known consequence of the splitting theorem
by J. Cheeger and D. Gromoll, \cite{CG-Ricci}, that a complete manifold with
non-negative Ricci curvature has at most two ends. Furthermore, if the Ricci
curvature is positive at some point, then we have connectedness at infinity.
According to works by H.-D. Cao, Y. Shen and S. Zhu, \cite{CSZ-MRL}, and P. Li
and J. Wang, \cite{LW-MRL} (see also \cite{PRS-Progress}), the Ricci curvature
assumption in Cheeger-Gromoll conclusion can be considerably relaxed provided
a Sobolev inequality of the form
\begin{equation}
S_{2^{\ast},2}\cdot\left\Vert \varphi\right\Vert _{L^{2^{\ast}}}\leq\left\Vert
\nabla\varphi\right\Vert _{L^{2}},\label{l2-sobolev}%
\end{equation}
with $2^{\ast}=2m/\left(  m-2\right)  $, for some constant $S_{2^{\ast},2}>0$
\ and for every $\varphi\in C_{c}^{\infty}\left(  M\right)  $ holds. {More
precisely, connectedness at infinity holds provided the Ricci curvature
satisfies $Ric\geq-q(x)$ where $q$ is a non negative continuous function on
$M$ such that
\begin{equation}
\int_{M}q(x)\varphi(x)^{2}dvol_{g}\leq\int_{M}|\nabla\varphi|^{2}%
dvol_{g}\label{cond.smallq}%
\end{equation}
for all smooth functions $\varphi$ with compact support in $M$. Note that this
condition means that the function $q\geq0$ is small in the following spectral
sense:
\[
\lambda_{1}^{-\Delta-q\left(  x\right)  }\left(  M\right)  =\inf_{\varphi\in
C_{c}^{1}\left(  M\right)  \backslash\left\{  0\right\}  }\frac{\int
_{M}\left(  \left\vert \nabla\varphi\right\vert ^{2}-q\cdot\varphi^{2}\right)
}{\int_{M}\varphi^{2}}\geq0.
\]
} Observe that, by reversing the viewpoint, a complete manifold disconnected
at infinity and with Ricci tensor subjected to the same conditions cannot
support the $L^{2^{\ast},2}$-Sobolev inequality (\ref{l2-sobolev}). In
general, it is an interesting and difficult problem to understand whether or
not a complete manifold enjoys some $L^{q,p}$-Sobolev inequality
\begin{equation}
S_{q,p}\cdot\left\Vert \varphi\right\Vert _{L^{q}}\leq\left\Vert \nabla
\varphi\right\Vert _{L^{p}},\label{lqp-sobolev}%
\end{equation}
for some constant $\ S_{q,p}>0$ \ and for every $\varphi\in C_{c}^{\infty
}\left(  M\right)  $.


In this respect, we point out that the validity of the $L^{q,p}$-Sobolev
inequality (\ref{lqp-sobolev}), when combined with a Ricci curvature
assumption, implies some constraints on the fundamental group of the
complete Riemannian manifold $M$. Indeed, a complete $m$-dimensional manifold with
non-negative Ricci curvature enjoys (\ref{lqp-sobolev}) for some (hence every)
$1\leq p<m$ and $q=mp/\left(  m-p\right)  $ if and only if the volume growth
is exactly Euclidean, \cite{Va-JFA, CS-Revista}. If this happens, then by a
result due independently to M. Anderson, \cite{An-Topology}, and P. Li,
\cite{Li-Annals}, the fundamental group of the manifold is necessarily finite.

Further interesting connections between topology and $L^{q,p}$-Sobolev
inequalities arise from a seminal work by Pansu, \cite{P-Torino}, recently
extended in \cite{GT-JGA}. Accordingly, the validity of (\ref{lqp-sobolev}) is
related to a \textquotedblleft global\textquotedblright\ cohomology theory
which is sensitive only on the geometry at infinity of the underlying
manifold, the so called $L^{q,p}$-cohomology, and gives information on the
solvability of non-linear differential equations involving the $p$-Laplace
operator (on differential forms). Very quickly, given $1<p,q<+\infty$,
the $L^{q,p}$-cohomology spaces of the complete Riemannian manifold $M$ are defined as follows;
we refer the reader to \cite{GT-JGA} for a detailed exposition. Let $L^{q}\left(
M,\Lambda^{k}\right)  $ denote the Banach space of $L^{q}$-integrable
$k$-forms endowed with the obvious norm $\left\Vert \omega\right\Vert
_{q}=\left(  \int_{M}\left\vert \omega\right\vert ^{q}\right)  ^{1/q}$. The
usual exterior differential $d$ on smooth, compactly supported $k$-forms
extends weakly to $L^{q}\left(  M,\Lambda^{k}\right)  $ and gives rise to the
Banach space%
\[
\Omega^k_{q,p}(M)=L^{q}(M,\Lambda^{k})\cap d^{-1}L^{p}((M,\Lambda^{k+1}))
\]
with norm $\left\Vert \omega\right\Vert _{q,p}=\left\Vert \omega\right\Vert
_{q}+\left\Vert d\omega\right\Vert _{p}$. In this way, the weak exterior
differential can be considered as a bounded linear operator $d_{q,p}%
^{k}:\Omega^k_{q,p}(M,\Lambda^{k})\rightarrow L^{p}(M,\Lambda^{k+1})$. Since it
satisfies the usual co-boundary rule $d\circ d=0$ then, as in the classical de
Rham cohomology, one is led to consider the corresponding subspaces of co-cycles
$Z_{q,p}^{k}(M)=\ker d_{q,p}^{k}$ and
co-boundaries $B_{q,p}^{k}(M)=d_{p,q}^{k-1}(\Omega^{k-1}_{q,p}(M))\subset
Z_{q,p}^{k}(M)$, and to define the $k^{\text{th}}$ space of the $L^{q,p}
$-cohomology of $M$ by setting
\[
H_{q,p}^{k}(M)=Z_{q,p}^{k}(M)\diagup B_{q,p}^{k}(M).
\]
By continuity, $Z_{q,p}^{k}(M)$ is always closed, hence Banach, whereas $B_{q,p}^{k}(M)$ could be not.
In case $B_{q,p}^{k}(M)=\overline{B_{q,p}^{k}(M)}$, the $L^{q,p}%
$-cohomology space $H_{q,p}^{k}(M)$ is said to be reduced. If $M$ is compact and if $1<p,q<\infty$ and  $1/p-1/q \leq 1/m$
then all the cohomology spaces $H_{q,p}^{k}(M)$ are
reduced and coincide with the usual de Rham spaces $H_{dR}^{k}(M)$ \cite[Theorem 12.10]{GT-JGA}.

On the other hand, if $M$ is bi-Lipschitz equivalent to $M^{\prime}$ \ then $H_{q,p}^{k}(M)\simeq
H_{q,p}^{k}(M^{\prime})$. In particular, if $M$ is not compact, its $L^{q,p}$-cohomology is not
affected by perturbing the Riemannian metric on a compact set. Now, in the
language of $L^{q,p}$-cohomology, the validity of the Sobolev inequality
(\ref{lqp-sobolev}) means precisely that the first $L^{q,p}$-cohomology space
of $M$ is reduced. Whence, if $M$ is simply connected and $H_{p,r}^{1}%
(M)\neq0$ for some $r>1$, using a variational argument one can show that $M$
supports a non-constant $p$-harmonic function with finite $p$-energy.
Reversing the viewpoint, this circle of ideas shows that, under the validity
of (\ref{lqp-sobolev}), if the first $L^{q,p}$-cohomology group vanishes, then
the existence of a non-constant $p$-harmonic function $v:M\rightarrow
\mathbb{R}$ with finite $p$-energy $\left\vert \nabla v\right\vert \in
L^{p}(M)$ implies that $M$ is not simply connected.\medskip

This brief and quite informal overview should have given some idea of the influence of $L^{q,p}$-Sobolev inequalities
on the topology and the complexity at infinity of the space.


The goal of this paper is to give a complete and self contained proof
of the following Theorem that extends to every $p\geq 2$ the results by Cao-Shen-Zhu and Li-Wang alluded to
at the beginning of paper.

\begin{theorem}\label{th_main}
Let $(M,g)$ be a complete non compact Riemannian manifold of dimension $m$. Let
$q>p\geq2$ be such that%
\[
  \frac{1}{p}-\frac{1}{q}\leq\frac{1}{m},
\]
and assume that $M$ supports an $L^{q,p}$-Sobolev inequality of the type
\[
S_{q,p}\left\Vert \varphi\right\Vert _{L^{q}}\leq\left\Vert \nabla
\varphi\right\Vert _{L^{p}},
\]
for some constant $\ S_{q,p}>0$ \ and for every $\varphi\in C_{c}^{\infty
}(M)  $. Assume that the Ricci tensor of $M$ is such that
\begin{equation}\label{ineq.ricci}
^{M}\!\Ric\geq-q\left(  x\right)  \text{ on }M
\end{equation}
for a suitable function $q\in C^0(M)  $. If the Schrodinger
operator $L=\Delta+Hq\left(  x\right)  $ \ satisfies%
\begin{equation}\label{lamba1}%
 \lambda_{1}^{-L}(M)  \geq0,
\end{equation}
for some constant $H>p^{2}/4\left(  p-1\right)  $, then, $M$ is connected at infinity,
i.e. for any compact set $F \subset M$, the complement $M\setminus F$ has exactly one
unbounded connected component.
\end{theorem}

The paper is organized as follows. In section 1 we give a rapid proof of the main Theorem, which is based on three facts
from the non linear potential theory of Riemannian manifolds. In section 2 we provide the necessary
background and in sections 3--5, we give detailed proofs
of the potential theoretical results that are used.
Our proofs and viewpoints are  independent of the existing literature (although we provide all the relevant references)
and somewhat more direct.
For instance, the potential theoretic properties of the ends are studied via a direct use of the doubling procedure and the
equivalence between $p$-parabolicity in terms of $p$-capacity and $p$-subharmonic functions is obtained without the use of
the non-linear Green kernel introduced by I. Holopainen.
In the route, we also deduce a form of the Ahlfors maximum principle characterization of $p$-parabolicity
using exterior domains and we extend a result by G. Carron concerning Sobolev inequalities outside a compact set.

\section{Proof of the main theorem}

We argue by contradiction. Assume that  the complement $M\setminus F$ of a given compact subset $F\subset M$, contains at least two disjoint unbounded connected component $E_1,E_2$. From a theorem by S. Buckley and P. Koskela (see \cite{BuKo-MathZ} and Theorem \ref{th_hypends} below),  we know that $E_1,E_2$ are $p$-\emph{hyperbolic}. This means that for any compact subset $K_i \subset \overline{E}_i$ there exists $\alpha_i>0$ such that
\begin{equation}\label{cond.hypE}
 \int_M |\nabla u|^pd\vol_g \geq \alpha_i
\end{equation}
for any $u\in C^{1}_c(M)$ such that $u \geq 1$ on $K_i$.

Using the $p$-hyperbolicity of $E_1,E_2$ together with a result of I. Holopainen  (see  \cite{Ho-Revista} and  Theorem \ref{th_pharm} below),  there exists a non-constant $p$-harmonic function $w$ of finite $p$-energy, that is a function
$w\in C^{1}(M)$ such that
$$
 \Div\left(|\nabla w|^{p-2} \nabla w \right)= 0 \qquad \text{and} \qquad
  \int_M |\nabla w|^pd\vol_g <\infty.
$$
The conclusion follows now from a Liouville type theorem recently proved by G. Veronelli and the first author (see   \cite{PV-Geom},  and
Theorem \ref{thm_liouville}  below). This theorem says that under the conditions (\ref{ineq.ricci})  (\ref{lamba1}) on the Ricci curvature, every $p$-harmonic function $u\in C^{1}(M)$ with finite $p$-energy
$\left\vert \nabla u\right\vert \in L^{p}(M) $ must be constant if  $p\geq2$.
Applying this result to the function $w$ above gives us the required contradiction.

\qed

\bigskip

To sum up, the main theorem follows from (1) the fact that the Sobolev inequality implies that $(M,g)$ has only $p$-hyperbolic ends,
(2) the fact that a manifold with more than one hyperbolic end carries a non constant $p$-harmonic function with finite $p$-energy
and (3)  a Liouville type theorem saying that under our curvature assumption, every $p$-harmonic function with finite $p$-energy
on $M$ is constant. In the following sections, we give precise statements and independent complete proofs for these three facts.

\section{Preliminary results from non-linear potential theory}

A basic notion in geometric analysis is that of $p$-parabolicity and $p$-hyperbolicity
of a Riemannian manifold ($1\leq p < \infty$),   see e.g.  \cite{Ho-Dissertation,Tr-Parab}.
Recall first that the $p$-capacity of a compact set $K$ in
a Riemannian manifold $(M,g)$ is defined as
\[
\mathrm{cap}_{p}\left(  K\right)  =\inf\int_{M}\left\vert \nabla
\varphi\right\vert ^{p}  d\vol_g,
\]
where the infimum is taken with respect to all functions $\varphi\in
C_{c}^{1}(M)$ such that $\varphi\geq1$ on $K$.

\begin{definition}
\label{def_par} A Riemannian manifold $(M,g)$ is said to be $p$-\emph{parabolic}
if the $p$-capacity of every compact set $K\subset M$ vanishes.
The manifold is $p$-\emph{hyperbolic} if it contains a compact set of positive
$p$-capacity.
\end{definition}
Compact manifolds are obviously $p$-parabolic for any $p$.
It is not hard to prove that on a connected $p$-hyperbolic manifold, every
compact set of positive measure has positive
$p$-capacity.

\medskip

The next result gives further equivalent
characterizations of $p$-parabolicity, we show in particular that $p$-parabolicity is
equivalent to an exterior maximum principle. Note that the equivalence between (ii)
-- (iv) below is proved following arguments valid in the case $p=2$ (see e.g.
\cite{PRS-Progress}) while the equivalence with condition (v) is a result in
\cite{GT-axiomatic}. Furthermore, the equivalence (i) -- (ii) was already
observed in \cite{Ho-Dissertation} using the non-linear Green function
introduced by the author, and can be deduced from the results in \cite{Tr-PAMS}.
However, we provide a new and direct argument. Finally, to the best of our
knowledge, the explicit equivalence (iii) -- (iv) has never been observed before.

\begin{theorem}
\label{th_parabcrit} Let $\left(  M,g \right)  $ be
a complete Riemannian manifold. The following conditions are equivalent.

\begin{enumerate}
\item[(i)] $M$ is $p$-parabolic.

\item[(ii)] If $u\in C^0(M)\cap W^{1,p}_{loc}(M)$ is a bounded above weak
solution of $\Delta_{p} u\geq0$ then $u$ is constant.

\item[(iii)] There exists a relatively compact domain $D$ in $M$ such that,
for every function $\varphi\in C(M\setminus D)\cap W^{1,p}_{loc}
(M\setminus\overline D)$ which is bounded above and satisfies $\Delta_{p}
\varphi\geq0$ weakly in $M\setminus\overline D$, $\sup_{M\setminus D}%
\varphi=\max_{\partial D}\varphi$.

\item[(iv)] For every open set $\Omega\subset M$ with $\partial \Omega\ne \emptyset$,
and for every $\psi\in C\left(
\bar{\Omega}\right)  \cap W_{loc}^{1,p}(\Omega)$ which is bounded above and
satisfies $\Delta_{p}\psi\geq0$ weakly on $\Omega$, $\sup_{\Omega}\psi
=\sup_{\partial\Omega}\psi$.

\item[(v)] There exists a compact set $K\subset M$ with the following
property. For every constant $C>0$, \ there exists a compactly supported
function $v\in W^{1,p}(M)  \cap C^0(M)  $ such that%
\[
\left\Vert v\right\Vert _{L^{p}\left(  K\right)  } \geq C\left\Vert \nabla
v\right\Vert _{L^{p}(M)  }.
\]

\end{enumerate}
\end{theorem}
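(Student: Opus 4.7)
The plan is to close the cycle (iv)$\Rightarrow$(iii)$\Rightarrow$(ii)$\Rightarrow$(iv) and then, separately, to establish (i)$\Leftrightarrow$(ii); the equivalence (i)$\Leftrightarrow$(v) I would simply quote from \cite{GT-axiomatic}. The implication (iv)$\Rightarrow$(iii) is immediate with $\Omega=M\setminus\overline D$. For (iii)$\Rightarrow$(ii), let $u\in C(M)\cap W^{1,p}_{loc}(M)$ be bounded above with $\Delta_pu\geq 0$ weakly; applying (iii) yields $\sup_{M\setminus D}u=\max_{\partial D}u$, so the global supremum of $u$ on $M$ is actually attained at an interior point of $M$ (either in $D$ or on $\partial D$), and the strong maximum principle for $\Delta_p$ then forces $u$ to be constant.

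For (ii)$\Rightarrow$(iv), take $\psi\in C(\overline\Omega)\cap W^{1,p}_{loc}(\Omega)$ bounded above with $\Delta_p\psi\geq 0$ weakly, set $M_0=\sup_{\partial\Omega}\psi$, and extend $\psi$ to all of $M$ by
\[
\tilde\psi=\begin{cases}\max(\psi,M_0)&\text{on }\overline\Omega,\\ M_0&\text{on }M\setminus\Omega.\end{cases}
\]
A routine truncation/gluing lemma shows that $\tilde\psi\in C(M)\cap W^{1,p}_{loc}(M)$ is bounded above and weakly $p$-subharmonic on $M$; by (ii) it is constant, equal to $M_0$ on an open set, hence $\psi\leq M_0$ on $\Omega$.

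The heart of the matter is (i)$\Leftrightarrow$(ii), to be handled without the nonlinear Green function. For (i)$\Rightarrow$(ii), first reduce to $0\leq u\leq 1$ by truncating from below with $\max(u,-N)$ and rescaling, each operation preserving weak $p$-subharmonicity. Using $p$-parabolicity and a compact exhaustion $\{K_n\}$, choose cutoffs $\varphi_n\in C^\infty_c(M)$ with $0\leq\varphi_n\leq 1$, $\varphi_n\equiv 1$ on $K_n$, and $\int_M|\nabla\varphi_n|^p\to 0$. Testing $\Delta_pu\geq 0$ against $\eta=\varphi_n^pu\geq 0$ and rearranging with H\"older yields the Caccioppoli-type inequality
\[
\int_M\varphi_n^p|\nabla u|^p\leq p^p\int_M|\nabla\varphi_n|^p,
\]
so $\int_{K_n}|\nabla u|^p\to 0$ and $u$ is constant. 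For the converse, if $M$ is $p$-hyperbolic, fix $K$ with $\mathrm{cap}_p(K)>0$, solve the Dirichlet problem $\Delta_pv_n=0$ on $\Omega_n\setminus K$ with $v_n=1$ on $K$ and $v_n=0$ on $\partial\Omega_n$ along an exhaustion $\Omega_n\nearrow M$, and pass to the monotone limit $v\in[0,1]$, obtaining a bounded, non-constant, weakly $p$-subharmonic function on $M$ that violates (ii).

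The principal technical obstacle is the rigorous justification of the test-function computation in the weak sense: one must ensure that $\eta=\varphi_n^pu$ is an admissible test function in $\int|\nabla u|^{p-2}\langle\nabla u,\nabla\eta\rangle\leq 0$, and that the successive reductions (truncation from below, affine rescaling, truncation by constants in the extension step) preserve membership in $C\cap W^{1,p}_{loc}$ together with weak $p$-subharmonicity. These manipulations are standard but noticeably more delicate than in the linear case $p=2$, requiring careful application of the chain rule in $W^{1,p}_{loc}$ for H\"older-continuous solutions of the $p$-Laplace equation; once they are in place, the rest of the argument is essentially a matter of bookkeeping.
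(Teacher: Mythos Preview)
Your cycle among (ii), (iii), (iv) and the citation for (i)$\Leftrightarrow$(v) are in line with the paper. For (i)$\Rightarrow$(ii) you take a genuinely different route: the paper proceeds by comparison, building the equilibrium potentials $h_i$ of a domain $D\subset\{u>1\}$ along an exhaustion, showing that their limit $h$ satisfies $\int|\nabla h|^p\leq\lim_i\mathrm{cap}_p(\overline D,D_i)=\mathrm{cap}_p(\overline D)=0$, hence $h\equiv 1$, and then squeezing $u\geq h_i\to 1$ to contradict $\inf u=0$. Your Caccioppoli argument---testing $\Delta_p u\geq 0$ against $\varphi_n^p u$ with cutoffs $\varphi_n$ furnished by $p$-parabolicity and satisfying $\int|\nabla\varphi_n|^p\to 0$---is correct and more direct; it bypasses the equilibrium-potential construction entirely, at the cost of the truncation and affine rescaling needed to reduce to $0\le u\le 1$.

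Your sketch of (ii)$\Rightarrow$(i), however, has a genuine gap. First a sign issue: the monotone limit $v$ (extended by $1$ on $K$) is $p$-\emph{super}harmonic on $M$, not $p$-subharmonic, since $v$ attains its maximum on $K$; you should pass to $-v$ or $1-v$. More importantly, you assert that the limit is nonconstant without justification, and this is exactly the nontrivial step: one must show that $\mathrm{cap}_p(K)>0$ forces $v\not\equiv 1$. Weak lower semicontinuity of the energy goes the wrong way here. The paper resolves this (in the direct formulation) via Mazur's Lemma: since the $v_n\in W_0^{1,p}(M)$ have $\nabla v_n\rightharpoonup\nabla v$ weakly in $L^p(M)$, if $v\equiv 1$ there exist convex combinations $w_j$ of the $v_n$ with $\nabla w_j\to 0$ strongly in $L^p$; each $w_j$ is continuous, compactly supported and equal to $1$ on $K$, whence $\mathrm{cap}_p(K)\le\int|\nabla w_j|^p\to 0$, contradicting $p$-hyperbolicity. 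Without this device (or an equivalent one) your contrapositive construction does not close.
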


\begin{proof}
(i) $\Rightarrow$ (ii). Let $M$ be $p$-parabolic, so that, for every compact
set $K$, $\mathrm{cap}_{p}(K)=0$, and assume by contradiction that there
exists a positive, $p$-superharmonic function $u$. By translating and scaling,
we may assume that $\sup u>1$ and $\inf u=0$. Note that, by the strong maximum
principle (see, e.g., \cite{HKM} Theorem 7.12) $u$ is strictly positive on
$M$. Next let $D$ be a relatively compact domain with smooth boundary
contained in the superlevel set $\{u>1\}$ and let $D_{i}$ be an exhaustion of
$M$ consisting of relatively compact domains with smooth boundary such that
$\overline{D}\subset\subset D_{1}$, and for every $i$ let $h_{i}$ be the
solution of the the Dirichlet problem%
\[
\left\{
\begin{array}
[c]{l}%
\Delta_{p}h_{i}=0\text{, on }D_{i}\setminus D\\
h_{i}=1\text{, on }\partial D\\
h_{i}=0\text{, on }\partial D_{i}.
\end{array}
\right.
\]
By a result of Tolksdorf \cite{Tolksdorf}, $h_{i}\in C_{loc}^{1,\alpha}\left(
D_{i}\setminus\overline{D}\right)  $. Furthermore, since $D$ and $D_{i}$ have
smooth boundaries, applying Theorem 6.27 in \cite{HKM} with $\theta$ any
smooth extension of the piecewise function%
\[
\theta_{0}=\left\{
\begin{array}
[c]{l}%
1\text{, on }\partial D\\
0\text{, on }\partial D_{i},
\end{array}
\right.
\]
we deduce that $h_{i}$ is continuous on $\overline{D}_{i}\setminus D$. By the
strong maximum principle, we have $0<h_{i}<1$ in $D_{i}\setminus\overline{D}$ and
using the comparison
principle, \cite[Lemma 3.18]{HKM},   we deduce that
$\left\{  h_{i}\right\}  $ is an increasing sequence. Hence, by the Harnack principle,
$\left\{  h_{i}\right\}  $ converges locally uniformly on $M\setminus D$ a
function $h$ which is continuous on $M\setminus D$, $p$-harmonic on
$M\setminus\overline{D}$ and satisfies $0<h\leq1$ on $M\setminus\overline{D}%
$\ and $h=1$ on $\partial D$. Again, $h\in C\left(  M\setminus D\right)  \cap
C_{loc}^{1,\alpha}\left(  M\setminus\bar{D}\right)  $.

Moreover, since $h_{i}$ is the $p$-equilibrium potential of the condenser
$(\overline{D},D_{i})$,
\[
\mathrm{cap}_{p}\left(  \overline{D},D_{i}\right)  =\int\left\vert \nabla
h_{i}\right\vert ^{p}=\inf\int\left\vert \nabla\varphi\right\vert ^{p},
\]
where the infimum is taken with respect to $\varphi\in C_{c}^{\infty}\left(
D_{i}\right)  $ such that $\varphi=1$ on $\partial D$. Think of each $h_{i}$
extended to be zero off $D_{i}$. Therefore $\left\{  \int_{M\setminus
\overline{D}}\left\vert \nabla h_{i}\right\vert ^{p}\right\}  $ is decreasing
and the sequence $\left\{  h_{i}\right\}  \subset W^{1,p}\left(
\Omega\right)  $ is bounded on every compact domain $\Omega$ of $M\setminus
\overline{D}$. By the weak compactness theorem, see, e.g., Theorem 1.32 in
\cite{HKM}, $h\in W^{1,p}\left(  \Omega\right)  $, and $\nabla h_{i}%
\rightarrow\nabla h$ weakly in $L^{p}\left(  \Omega\right)  $. In particular,
\[
\int_{\Omega}\left\vert \nabla h\right\vert ^{p}\leq\liminf_{i\rightarrow
+\infty}\int_{D_{i}\setminus D}\left\vert \nabla h_{i}\right\vert ^{p}.
\]
On the other hand, it follows easily from the definition of capacity, that
$\lim_{i}\mathrm{cap}_{p}(\overline{D},D_{i})=\mathrm{cap}_{p}(\overline
{D})=0$. Thus, letting $\Omega\nearrow M\setminus\overline{D}$ we conclude
that
\[
\int_{M\setminus\overline{D}}|\nabla h|^{p}=0,
\]
so that $h$ is constant, and since $h=1$ on $\partial D$, $h\equiv1$. Finally,
since $u$ is $p$-superharmonic and $u>h_{i}$ on $\partial D\cup\partial D_{i}%
$, by the comparison principle, $u\geq h_{i}$ on $D_{i}\setminus D$, and
letting $i\rightarrow\infty$ we conclude that $u\geq1$ on $M$, contradiction.

(ii) $\Rightarrow$ (i). Given a relatively compact domain $D$, let $h_{i}$ and
$h$ be the functions constructed above, and extend $h$ to be $1$ in $D$, so
that $h$ is continuous on $M$, bounded, and satisfies $\Delta_{p}h\leq0$
weakly on $M$. Thus (ii) implies that $h$ is identically equal to $1$. On the
other hand, since the functions $h_{i}$ belong to $W_{0}^{1,p}(M)$, Lemma 1.33
in \cite{HKM} shows that $\nabla h_{i}$ converges to $\nabla h$ weakly in
$L^{p}(M)$. By Mazur's Lemma (see Lemma 1.29 in \cite{HKM}) there exists a
sequence $v_{k}$ of convex combinations of the $h_{i}$'s such that $\nabla
v_{k}$ converges to $\nabla h$ strongly in $L^{p}$. Thus $v_{k}$ is
continuous, compactly supported, identically equal to $1$ on $\overline{D}$
(because so are all the $h_{i}$'s) and $\int_{M}|\nabla v_{k}|^{p}%
\rightarrow\int|\nabla h|^{p}=0$, showing that $\mathrm{cap}_{p}(\overline
{D})=0$, and $M$ is $p$-parabolic.

(iii) $\Rightarrow$ (iv). Assume that (iii) holds, and suppose by
contradiction that there exist a domain $\Omega$ and a function $\psi$ as in
(iv) for which $\sup_{\partial\Omega}\psi<\sup_{\Omega}\psi$. Note that, by
the strong maximum principle, $\Omega$ is unbounded. Choose $0<\varepsilon
<\sup_{\Omega}\psi-\sup_{\partial\Omega}\psi$ sufficiently near to
$\sup_{\Omega}\psi-\sup_{\partial\Omega}\psi$ so that $\overline{D}%
\cap\left\{  \psi>\sup_{\partial\Omega}\psi+\varepsilon\right\}  =\emptyset$.
This is possible according to the strong maximum principle, because
$\overline{D}$ is compact. Define $\tilde{\psi}\in C^0(M)  \cap
W_{loc}^{1,p}(M)  $ by setting%
\[
\tilde{\psi}(x)=\max\{\sup_{\partial\Omega}\psi+\varepsilon,\psi(x)\}
\]
and note that $\Delta_{p}\tilde{\psi}\geq0$ on $M$. According to property
(iii),%
\[
\max_{\partial D}\tilde{\psi}=\sup_{M\backslash D}\tilde{\psi}.
\]
However, since $\overline{D}\cap\left\{  \psi>\sup_{\partial\Omega}%
\psi+\varepsilon\right\}  =\emptyset$,%
\[
\max_{\partial D}\tilde{\psi}=\sup_{\partial\Omega}\psi+\varepsilon
<\sup_{\Omega}\psi,
\]
while%
\[
\sup_{M\backslash D}\tilde{\psi}=\sup_{\Omega}\psi.
\]
The contradiction completes the proof.

(iv)$\Rightarrow$(iii). Trivial.

(iv)$\Rightarrow$(ii). Assume by contradiction that there exists $u\in
C^0(M)\cap W_{loc}^{1,p}(M)$ which is non-constant, bounded above and satisfies
$\Delta_{p}u\geq0$ weakly on $M$. Given $\gamma<\sup u$, the set
$\Omega_{\gamma}=\{u>\gamma\}$ is open, and $u$ is continuous and bounded
above in $\overline{\Omega}_{\gamma}$, satisfies $\Delta_{p}u\geq0$ weakly in
$\Omega_{\gamma}$ and $\max_{\partial\Omega_{\gamma}}u<sup_{\Omega_{\gamma}}%
u$, contradicting (iv).

(ii)$\Rightarrow$(iv). If there exists $\psi\in C\left(  \overline{\Omega
}\right)  \cap W_{loc}^{1,p}\left(  \Omega\right)  $ satisfing $\Delta_{p}%
\psi\geq0$ and $\sup_{\Omega}\psi>\max_{\partial\Omega}\psi+2\varepsilon$, for
some $\varepsilon>0$, then%
\[
\psi_{\varepsilon}=\left\{
\begin{array}
[c]{ll}%
\max\left\{  \psi,\max_{\partial\Omega}\psi+\varepsilon\right\}  & \text{in
}\Omega\\
\max_{\partial\Omega}+\varepsilon & \text{in }M\backslash\Omega,
\end{array}
\right.
\]
is a non-constant, bounded above, weak solution of $\Delta_{p}\psi
_{\varepsilon}\geq0$ on $M$. This contradicts (ii).

For the equivalence (i) $\Leftrightarrow$ (v), see \cite{GT-axiomatic} Theorem 3.1.
\end{proof}

We now localize the concept of parabolicity on a given end. Recall that, by
definition, an end $E$ of $M$ with respect to a compact domain $F$ is any of
the unbounded connected components of $M\backslash F$.

\begin{definition}
\label{def_parends}An end $E$ of the Riemannian manifold $\left(
M,g \right)  $ is said to be $p$-parabolic if, for
every compact set $K\subset\bar{E}$,%
\[
\mathrm{cap}_{p}\left(  K,E\right)  =\inf\int_{E}\left\vert \nabla
\varphi\right\vert ^{p}=0,
\]
where the infimum is taken with respect to all $\varphi\in C_{c}^{\infty
}\left(  \bar{E}\right)  $ such that $\varphi\geq1$ on $K$.
\end{definition}

We have the following characterizations of the parabolicity of ends.

\begin{definition}
The Riemannian double of a manifold $E$ with smooth, compact boundary
$\partial E$ is defined to be  a smooth Riemannian manifold (without boundary) $\mathcal{D}%
(E)$ such that (i) $\mathcal{D}(E)$ is complete (ii) $\mathcal{D}(E)$ is
homeomorphic to the topological double of $E$ and (iii) there is a compact set
$K\subset\mathcal{D}(E)$ such that $\mathcal{D}(E)\setminus K$ has two
connected components, both isometric to $E$.
\end{definition}
Observe that this is not uniquely
defined, but all such \textquotedblleft doubles\textquotedblright are
bilipshitz equivalent.

\begin{theorem}
\label{th_parabcritends}An end $E$ with smooth boundary $\partial E$ is
$p$-parabolic if {and only if}
either  {one} of the following equivalent conditions is satisfied:
\begin{enumerate}
\item[(i)] For every continuous $\phi:\bar{E}\rightarrow\mathbb{R}$ which is
bounded above and $p$-subharmonic, $\sup_{E}\phi=\max_{\partial E}\phi$.

\item[(ii)] The (Riemannian) double $\mathcal{D}\left(  E\right)  $ of $E$ is
a $p$-parabolic manifold without boundary.
\end{enumerate}
\end{theorem}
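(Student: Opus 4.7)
The plan is to establish the two equivalences separately. For Definition~\ref{def_parends}~$\Leftrightarrow$~(ii) I would argue directly via capacity, symmetrizing test functions on $\mathcal D(E)$ and reflecting test functions on $\bar E$. For Definition~\ref{def_parends}~$\Leftrightarrow$~(i) I would adapt the exhaustion and $p$-equilibrium-potential scheme from the proof of Theorem~\ref{th_parabcrit} to the manifold with compact smooth boundary $\bar E$, using a mixed Dirichlet-Neumann equilibrium potential. Throughout, let $\sigma:\mathcal D(E)\to\mathcal D(E)$ denote the isometric involution that swaps the two copies $E_1,E_2$ of $E$.

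For Definition~\ref{def_parends}~$\Leftrightarrow$~(ii), given any compact $\tilde K\subset\mathcal D(E)$, enlarge it to $K\cup\sigma(K)$ with $K\subset\bar E$ compact. Any admissible test function $\phi_n\in C_c^\infty(\bar E)$ for $\mathrm{cap}_p(K,E)$ with $\int_E|\nabla\phi_n|^p\to 0$ extends by reflection to a Lipschitz, compactly supported $\tilde\phi_n$ on $\mathcal D(E)$ by setting $\tilde\phi_n(x)=\phi_n(x)$ on $E_1$ and $\tilde\phi_n(x)=\phi_n(\sigma(x))$ on $E_2$; then $\tilde\phi_n\ge 1$ on $\tilde K$ and $\int_{\mathcal D(E)}|\nabla\tilde\phi_n|^p=2\int_E|\nabla\phi_n|^p\to 0$, so after mollification $\mathcal D(E)$ is $p$-parabolic. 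Conversely, given admissible $\varphi_n$ for $\mathrm{cap}_p^{\mathcal D(E)}(K\cup\sigma(K))$ with vanishing $p$-energy, symmetrize to $\psi_n=\tfrac12(\varphi_n+\varphi_n\circ\sigma)$; convexity of $t\mapsto t^p$ and the isometry of $\sigma$ yield
\[
\int_E|\nabla\psi_n|^p=\tfrac12\int_{\mathcal D(E)}|\nabla\psi_n|^p\le\tfrac12\int_{\mathcal D(E)}|\nabla\varphi_n|^p\to 0,
\]
so $\psi_n|_E$ is admissible for $\mathrm{cap}_p(K,E)$ with vanishing $p$-energy.

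For Definition~\ref{def_parends}~$\Leftrightarrow$~(i), I would mimic the proof of Theorem~\ref{th_parabcrit}, but with the interior Dirichlet condenser potentials replaced by solutions $h_i$ of the mixed boundary value problem $\Delta_p h_i=0$ on $(D_i\cap E)\setminus\overline D$, with $h_i=1$ on $\partial D$, $h_i=0$ on $\partial D_i\cap E$, and $\partial_\nu h_i=0$ on $\partial E\cap D_i$; here $\{D_i\}$ exhausts $E$ by smooth relatively compact subdomains and $\overline D\subset E$ is a fixed reference compactum with smooth boundary. Since $h_i$ is variationally characterized as the minimizer of the $p$-energy over functions satisfying the two Dirichlet conditions (the Neumann condition arising as the natural boundary condition at the free portion $\partial E$), $\int|\nabla h_i|^p=\mathrm{cap}_p(\overline D,E\cap D_i)\to 0$ by Definition~\ref{def_parends}, forcing the monotone limit $h\equiv 1$ on $E\setminus\overline D$. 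A non-constant $\phi$ violating (i) produces, after vertical translation, a positive bounded $p$-superharmonic $u$ on $\bar E$ whose infimum is $0$ and whose minimum on $\partial D$ is strictly positive; the comparison inequality $u\ge(\inf_{\partial D}u)\,h_i$ together with $i\to\infty$ then yields the same sort of contradiction as in the proof of Theorem~\ref{th_parabcrit}.

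The main analytical obstacle is the extension of weak compactness, Harnack, $C^{1,\alpha}$-regularity and comparison principles up to the Neumann portion of the boundary. These are classical ingredients of the quasilinear theory (Tolksdorf, Lieberman) and their adaptation to the mixed Dirichlet-Neumann setting on a manifold with smooth compact boundary is routine; once admitted, both equivalences reduce to adaptations of arguments from the proof of Theorem~\ref{th_parabcrit} together with the symmetrization step above, and no essentially new idea is required beyond the double construction and the Neumann-type equilibrium potential.
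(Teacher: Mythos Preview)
The paper states Theorem~\ref{th_parabcritends} without proof, so there is no argument of the authors to compare yours against; I can only assess your outline on its own merits.

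Your capacity argument for Definition~\ref{def_parends}~$\Leftrightarrow$~(ii) is essentially correct, with one caveat: the paper's notion of Riemannian double only demands that $\mathcal D(E)\setminus K$ consist of two isometric copies of $E$ for \emph{some} compact $K$, so a global isometric involution $\sigma$ need not exist. Your reflection and symmetrization steps are valid for a double that does carry such an involution (one can always build one), and the conclusion transfers to any other double by the bilipschitz equivalence recorded just before the theorem; you should make this explicit.

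For Definition~\ref{def_parends}~$\Leftrightarrow$~(i) there are two genuine gaps. First, you only argue the implication Def~$\Rightarrow$~(i); the converse (i)~$\Rightarrow$~Def is not addressed. Second, even in the direction you treat, the comparison $u\ge(\inf_{\partial D}u)\,h_i$ is unjustified on the free portion $\partial E\cap D_i$: your barrier $h_i$ carries a Neumann condition there, but the $p$-superharmonic $u$ manufactured from a violator of (i) has no prescribed normal behaviour on $\partial E$, so the standard comparison principle does not apply across that piece of the boundary. A cleaner route, once Def~$\Leftrightarrow$~(ii) is in hand, avoids the mixed problem entirely. For (ii)~$\Rightarrow$~(i), view $E$ as a domain in the $p$-parabolic manifold $\mathcal D(E)$ and invoke Theorem~\ref{th_parabcrit}\,(iv) with $\Omega=E$. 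For the remaining implication, argue contrapositively: if the end is $p$-hyperbolic, the construction of Lemma~\ref{lemma_pharmends} produces a nonconstant $p$-harmonic $h\in C(\bar E)$ with $h=1$ on $\partial E$ and $\inf_E h=0$, and then $\phi=-h$ is continuous, bounded above and $p$-subharmonic with $\sup_E\phi=0>-1=\max_{\partial E}\phi$, violating (i).
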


Condition (i) in Theorem~\ref{th_parabcritends} yields easily the following
necessary and sufficient condition for an end to be $p$-hyperbolic.

\begin{corollary}
\label{p-hyperbolicity ends} An end $E$ is $p$-hyperbolic if and only if there
exists a function $\psi\in C(\overline{E})\cap W_{loc}^{1,p}(E)$ which is
$p$-superharmonic and such that $\inf_{E}\psi=0$ and $\psi\geq1$ on $\partial
E.$
\end{corollary}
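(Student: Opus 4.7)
The plan is to deduce both implications directly from characterization~(i) of Theorem~\ref{th_parabcritends}: $E$ is $p$-parabolic iff every continuous, bounded-above, $p$-subharmonic function $\phi$ on $\bar E$ satisfies $\sup_E \phi = \max_{\partial E}\phi$. The bridge between the corollary (phrased in terms of a $p$-superharmonic $\psi$) and that criterion (phrased in terms of $p$-subharmonic $\phi$) is supplied by the two elementary identities
\[
\Delta_p(-u)=-\Delta_p u, \qquad \Delta_p(a+bu)=b^{p-1}\Delta_p u \ (b>0),
\]
which follow at once from $\Delta_p u=\operatorname{div}(|\nabla u|^{p-2}\nabla u)$. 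The first exchanges super- and sub-harmonicity; the second permits affine rescaling inside the class.

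For the implication ($\Leftarrow$), I argue by contraposition. Assume the stated $\psi$ exists and that $E$ is $p$-parabolic. Put $\phi:=-\psi$. Then $\phi\in C(\bar E)\cap W_{loc}^{1,p}(E)$, it is $p$-subharmonic (first identity), and $\sup_E\phi=-\inf_E\psi=0$, so $\phi$ is bounded above. Theorem~\ref{th_parabcritends}(i) would force $\sup_E\phi=\max_{\partial E}\phi$; but $\max_{\partial E}\phi=-\min_{\partial E}\psi\leq -1<0=\sup_E\phi$, a contradiction. Hence $E$ is $p$-hyperbolic.

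For the implication ($\Rightarrow$), assume $E$ is $p$-hyperbolic, so condition (i) of Theorem~\ref{th_parabcritends} fails: some continuous, bounded-above, $p$-subharmonic $\phi$ on $\bar E$ satisfies $a:=\max_{\partial E}\phi<b:=\sup_E\phi<\infty$. (Note that the inequality $\sup_E\phi\geq\max_{\partial E}\phi$ is automatic by continuity of $\phi$ up to $\partial E$, so any failure of equality is this strict inequality.) Form $\tilde\phi:=(\phi-a)/(b-a)$, which by the second identity remains $p$-subharmonic, with $\max_{\partial E}\tilde\phi=0$ and $\sup_E\tilde\phi=1$. Finally set $\psi:=1-\tilde\phi$. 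By the first identity $\psi$ is $p$-superharmonic, it is continuous on $\bar E$, satisfies $\psi\geq 1$ on $\partial E$ (because $\tilde\phi\leq 0$ there), and $\inf_E\psi=1-\sup_E\tilde\phi=0$. The regularity $\psi\in C(\bar E)\cap W_{loc}^{1,p}(E)$ is inherited from the analogous regularity of $\phi$, implicit in the non-linear potential theoretic framework of Theorem~\ref{th_parabcritends}.

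The only step needing verification is the algebraic behaviour of $\Delta_p$ under sign-change and positive affine rescaling, together with the observation that continuous $p$-subharmonic functions on $\bar E$ always satisfy $\sup_E\phi\geq\max_{\partial E}\phi$ (by approximating boundary points from within $E$). None of this is a substantive obstacle; the corollary is essentially a direct translation of Theorem~\ref{th_parabcritends}(i) through the super/sub duality.
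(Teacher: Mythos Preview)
Your argument is correct and is precisely the ``easy'' derivation from condition~(i) of Theorem~\ref{th_parabcritends} that the paper indicates but does not spell out: both directions are obtained by passing between $p$-sub- and $p$-superharmonic functions via $u\mapsto -u$ and affine rescaling. The only point worth a word of care is the $W^{1,p}_{loc}$ regularity in the $(\Rightarrow)$ direction, which---as you note---is implicit in the paper's use of ``$p$-subharmonic'' (cf.\ the formulation of Theorem~\ref{th_parabcrit}), so no gap arises.
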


Corollary~\ref{p-hyperbolicity ends} allows us to obtain the existence of
special $p$-harmonic functions on $p$-hyperbolic ends (whose existence, in
view of Theorem~\ref{th_parabcrit} in fact characterizes $p$-hyperbolic ends).

\begin{lemma}
\label{lemma_pharmends}Let $E$ be a $p$-hyperbolic end of $\left(
M,g \right)  $ with smooth boundary. Then, there
exists a non-constant $\ p$-harmonic function $h\in C\left(  \bar{E}\right)
\cap C_{loc}^{1,\alpha}\left(  E\right)  $ such that:

\begin{enumerate}
\item[(1)] $0<h\leq1$ in $\bar{E}$,

\item[(2)] $h=1$ on $\partial E$,

\item[(3)] $\inf_{\bar{E}}h=0,$

\item[(4)] $\left\vert \nabla h\right\vert \in L^{p}\left(  \bar{E}\right)  .$
\end{enumerate}
\end{lemma}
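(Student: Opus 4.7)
The plan is to construct $h$ as the monotone limit of solutions of Dirichlet problems on an exhaustion of $E$, following closely the argument used to establish the implication (i)$\Rightarrow$(ii) in Theorem~\ref{th_parabcrit}, and then to extract the extra properties (3) and (4) from the $p$-hyperbolicity of $E$.

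First I would fix a smooth, relatively compact exhaustion $\{E_i\}$ of $E$ with $\partial E \subset \partial E_i$, $E_i \Subset E_{i+1}$ and $\bigcup_i E_i = E$. On each $E_i$ I solve the Dirichlet problem
\[
\Delta_p h_i = 0 \text{ on } E_i\setminus\partial E,\qquad h_i = 1 \text{ on }\partial E,\qquad h_i = 0 \text{ on }\partial E_i\setminus\partial E.
\]
Exactly as in the proof of Theorem~\ref{th_parabcrit}, Tolksdorf's regularity gives $h_i \in C^{1,\alpha}_{loc}$ in the interior, Theorem 6.27 of \cite{HKM} (applied to a smooth extension of the piecewise boundary data) yields continuity up to $\partial E_i$, and the strong maximum principle gives $0<h_i<1$ in the interior. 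Extending $h_i$ by $0$ outside $E_i$ gives an admissible subsolution for the problem defining $h_{i+1}$, so the comparison principle yields $h_i \le h_{i+1}$; the Harnack principle then lets me pass to a locally uniform limit $h \in C(\bar E)\cap C^{1,\alpha}_{loc}(E\setminus\partial E)$ which is $p$-harmonic in the interior, satisfies $0<h\le 1$ on $\bar E$ and $h = 1$ on $\partial E$. This secures (1), (2), and the regularity stated.

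For (3), I invoke Corollary~\ref{p-hyperbolicity ends}: since $E$ is $p$-hyperbolic there exists $\psi\in C(\bar E)\cap W^{1,p}_{loc}(E)$, $p$-superharmonic, with $\psi\ge 1$ on $\partial E$ and $\inf_E\psi = 0$. On each $E_i$, $\psi\ge h_i$ on the whole boundary ($\psi\ge 1 = h_i$ on $\partial E$ and $\psi\ge 0 = h_i$ on $\partial E_i\setminus\partial E$), and the $p$-comparison principle upgrades this to $h_i\le \psi$ in $E_i$. Passing to the limit yields $h\le \psi$ on $\bar E$, hence $\inf_{\bar E} h \le \inf_E\psi = 0$, and combined with $h>0$ this gives exactly $\inf_{\bar E} h = 0$.

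The last and most delicate point is (4). Here I use that $h_i$ is the $p$-capacitary potential of the condenser $(\partial E,\partial E_i\setminus\partial E)$ in $E_i$, so
\[
\int_{E_i}|\nabla h_i|^p = \mathrm{cap}_p(\partial E,E_i).
\]
Extending $h_i$ by $0$ outside $E_i$ produces a competitor for $h_{i+1}$'s minimization problem on $E_{i+1}$; by the minimizing property of $h_{i+1}$,
\[
\int_{E_{i+1}}|\nabla h_{i+1}|^p \;\le\; \int_{E_{i+1}}|\nabla h_i|^p \;=\; \int_{E_i}|\nabla h_i|^p.
\]
Thus the energies are non-increasing, and in particular uniformly bounded by $\int_{E_1}|\nabla h_1|^p<\infty$. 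On each fixed relatively compact subdomain $\Omega\Subset E$, the sequence $\{h_i\}$ is therefore bounded in $W^{1,p}(\Omega)$, so by the weak compactness theorem (Theorem 1.32 in \cite{HKM}) $\nabla h_i\rightharpoonup \nabla h$ weakly in $L^p(\Omega)$, and by weak lower semicontinuity
\[
\int_\Omega|\nabla h|^p \;\le\; \liminf_{i\to\infty}\int_{E_i}|\nabla h_i|^p \;\le\; \int_{E_1}|\nabla h_1|^p.
\]
Letting $\Omega\nearrow E$ by monotone convergence gives $|\nabla h|\in L^p(\bar E)$, establishing (4) and completing the proof. The main obstacle is the energy bound in (4): one must select the right competitor (namely $h_i$ extended by $0$) to push the minimizing property of $h_{i+1}$ in the correct direction, so that the sequence of Dirichlet energies is monotone and hence controlled by the first term.
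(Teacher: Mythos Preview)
Your proof is correct and follows essentially the same approach as the paper's: you build $h$ as the monotone limit of Dirichlet solutions on an exhaustion, obtain (3) by comparison with the barrier $\psi$ from Corollary~\ref{p-hyperbolicity ends}, and get (4) from the monotone-decreasing energies via weak compactness and lower semicontinuity. The only cosmetic difference is that the paper cites Lemma~1.33 of \cite{HKM} directly to obtain $\nabla h\in L^p(E)$ and weak $L^p$-convergence of $\nabla h_i$, whereas you unpack this through Theorem~1.32 on compact subdomains and then exhaust.
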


\begin{proof}
Take a smooth exhaustion $D_{i}$ of $M$ with $\partial E\subset D_{0}$. Set
$E_{i}=E\cap D_{i}$ and solve the Dirichlet problem%
\[
\left\{
\begin{array}
[c]{ll}%
\Delta_{p}h_{i}=0\text{,} & \text{ on }E_{i}\\
h_{i}=1\text{,} & \text{ on }\partial E\\
h_{i}=0\text{,} & \text{ on }\partial D_{i}\cap E.
\end{array}
\right.
\]
By the arguments used in the proof of Theorem~\ref{th_parabcrit}, $h_{i}\in
C_{loc}^{1,\alpha}\left(  E_{i}\right)  \cap C(\overline{E}_{i})$, $0<h_{i}<1$
in $E_{i}$, it is increasing and converges (locally uniformly) to a
$p$-harmonic function $h$ on $h\in C\left(  \bar{E}\right)  \cap
C_{loc}^{1,\alpha}\left(  E\right)  $\ satisfying $0<h\leq1$ \ and $h=1$ on
$\partial E$. Since $E$ is $p$-hyperbolic, there exists a function $\psi$ with
the properties listed in Corollary~\ref{p-hyperbolicity ends}. By the
comparison principle, $h_{i}\leq\psi$ for every $i$, and passing to the limit,
$h\leq\psi$, so that $\inf_{E}h=0$ and in particular $h$ is non-constant.

To prove that $h$ has finite $p$-energy we argue as in the proof of
Theorem~\ref{th_parabcrit}, to show that $\left\{  \int_{E}\left\vert \nabla
h_{i}\right\vert ^{p}\right\}  $ (where $h_{i}$ is extended to $E$ by setting
it equal to $0$ in $E\setminus E_{i}$) is decreasing and, by Lemma 1.33 in
\cite{HKM}, $\nabla h\in L^{p}(E)$ and $\nabla h_{i}$ converges to $\nabla h$
weakly in $L^{p}(E)$.
\end{proof}

\begin{remark}
\label{rm3}Suppose that the end $E$ is $p$-parabolic. Then, the same
construction works but, in this case, by the boundary maximum principle
characterization of parabolicity, we have $h\equiv1$.
\end{remark}

\section{Sobolev inequalities, volume and hyperbolicity of ends}

In this section we show that the validity of an $L^{q,p}$-Sobolev inequality
implies that each of the ends of the underlying manifold is $p$-hyperbolic.
Unlike previous investigations by Li-Wang \cite{LW-MRL} and Cao-Shen-Zhu \cite{CSZ-MRL} for $p=2$, and Buckley-Koskela \cite{BuKo-MathZ}
for general $p$ and general metric ambient spaces, our strategy is to use in a natural
way the doubling construction
on the given end, thus reducing the study to the case of a manifold without boundary.
Technical difficulties arising from the validity of the Sobolev inequality only outside
a compact set are overcome by extending a previous result by Carron.

We begin by describing the effect on volume growth of the validity of a Sobolev inequality.
It is elementary to show that if an $L^{q,p}$-Sobolev inequality holds on a
manifold then the manifold has infinite volume. Indeed, having fixed $x_{o}$
in $M$ we consider a family $\left\{  \varphi_{R}\right\}  _{R>0}$ of cut-off
functions satisfying: (a) $0\leq\varphi_{R}\leq1$; (b) $\varphi_{R}=1$ on
$B_{R/2}\left(  x_{o}\right)  $; (c) \textrm{supp}$\left(  \varphi_{R}\right)
\subset B_{R}(x_{o})$; (d) $\left\vert \nabla\varphi_{R}\right\vert \leq4/R$
on $M$. Using $\varphi_{R}$ into the Sobolev inequality gives
\[
S_{q,p}\mathrm{vol}\left(  B_{R/2}\left(  x_{o}\right)  \right)  ^{1/q}\leq
S_{q,p}\left\Vert \varphi_{R}\right\Vert _{L^{q}}\leq\left\Vert \nabla
\varphi_{R}\right\Vert _{L^{p}}\leq\frac{4}{R}\mathrm{vol}\left(  B_{R}\left(
o\right)  \right)  ^{1/p},
\]
which, in turn, implies the non-uniform estimate%
\[
\mathrm{vol}\left(  B_{R}\left(  o\right)  \right)  \geq CR^{p},
\]
for every $R\geq1$ and for some constant $C=\left(  4^{-1}S_{q,p}%
\mathrm{vol}\left(  B_{1}\left(  o\right)  \right)  ^{1/q}\right)  ^{p}>0$. In
particular $\mathrm{vol}(M)  =+\infty$ and at least one of the
ends of $M$ has infinite volume.

In order to extend this conclusion to each individual end
we can use a uniform volume estimate whose principle can be traced back to papers by
G. Carron, \cite{C-Spectral} and  K. Akutagawa \cite{akutagawa}, see also \cite[lemma 2.2 ]{H-NA} and
\cite[theorem 3.1.5]{scoste}. We state this estimate in a form suitable for our purposes.

\begin{proposition}
\label{th_volume} Let $E$ be an  end of the complete manifold $M$ with
respect to the compact set $F$ and assume that the $L^{q,p}$-Sobolev
inequality (\ref{lqp-sobolev}) holds on $E$, for some $q>p\geq1$. Then there
exists positive constant $C_{1}$  depending only on $p,q$ and
$S_{q,p}$ such that, for every geodesic ball $B_{R}(x_{0})\subset E$
\begin{equation}
vol\left(  B_{R}\left(  x_{0}\right)  \right)  \geq C_{1}R^{\frac{pq}{q-p}}.
\label{volume}%
\end{equation}
In particular, if $F\subset B_{R_{0}}(o)$, then for every $x_{0}\in E$ with
$d(x_{0},o)\geq R+R_{0}$ the ball $B_{R}(x_{0})$ is contained in $E,$ and $E$ has
infinite volume.
\end{proposition}

\begin{proof}
For every $\Omega\subset E$ let
\[
\lambda\left(  \Omega\right)  =\inf\frac{\int_{\Omega}\left\vert \nabla
\varphi\right\vert ^{p}}{\int_{\Omega}\left\vert \varphi\right\vert ^{p}},
\]
the infimum being taken with respect to all $\varphi\in W_{c}^{1,p}\left(
\Omega\right)  $, $\varphi\not \equiv 0$. By the Sobolev and H\"older
inequalities, for every such $\varphi$ we have
\[
\int_{\Omega}\varphi^{p} \leq\mathrm{vol} (\Omega)^{\frac{q-p}{q}} \left(
\int_{\Omega}\varphi^{q}\right)  ^{\frac pq} \leq\left(  S_{q,p}
||\nabla\varphi||_{p}\right)  ^{p},
\]
and therefore
\begin{equation}
vol\left(  \Omega\right)  ^{\frac{q-p}{q}}\lambda\left(  \Omega\right)  \geq
S_{q,p}^{p}. \label{lamba below}%
\end{equation}
On the other hand, choosing $\Omega=B_{R}\left(  x_{0}\right)  $ and%
\[
\varphi\left(  x\right)  =R-d\left(  x,x_{0}\right)
\]
we deduce that
\begin{align}
\lambda\left(  B_{R}\left(  x_{0}\right)  \right)   &  \leq\frac{vol\left(
B_{R}\left(  x_{0}\right)  \right)  }{\int_{B_{R}\left(  x_{0}\right)
}\left(  R-d\left(  x,x_{0}\right)  \right)  ^{p}}\label{lamba above}\\
&  \leq\frac{vol\left(  B_{R}\left(  x_{0}\right)  \right)  }{\int
_{B_{R/2}\left(  x_{0}\right)  }\left(  R-d\left(  x,x_{0}\right)  \right)
^{p}}\nonumber\\
&  \leq\frac{2^{p}vol\left(  B_{R}\left(  x_{0}\right)  \right)  }%
{R^{p}vol\left(  B_{R/2}\left(  x_{0}\right)  \right)  }.\nonumber
\end{align}
Combining (\ref{lamba below}) and (\ref{lamba above}) we obtain
\[
vol\left(  B_{R}\left(  x_{0}\right)  \right)  ^{1+\frac{q-p}{q}}\geq
2^{-p}S_{q,p}^{p}R^{p}vol\left(  B_{R/2}\left(  x_{0}\right)  \right)  ,
\]
i.e.,%
\[
vol\left(  B_{R}\left(  x_{0}\right)  \right)  \geq\left(  2^{-p}S_{q,p}%
^{p}R^{p}\right)  ^{\alpha}vol\left(  B_{R/2}\left(  x_{0}\right)  \right)
^{\alpha},
\]
with%
\[
0<\alpha=\frac{1}{1+\frac{q-p}{q}}<1.
\]
Iterating this inequality $k$-times yields%
\[
vol\left(  B_{R}\left(  x_{0}\right)  \right)  \geq2^{-p\alpha\sum_{j=1}%
^{k}j\alpha^{j}}\left(  2^{-p}S_{q,p}^{p}R^{p}\right)  ^{\sum_{j=1}^{k}%
\alpha^{j}}vol\left(  B_{R/2^{k}}\left(  x_{0}\right)  \right)  ^{\alpha^{k}%
}.
\]
Since%
\[
volB_{r}\left(  x_{0}\right)  \sim\omega_{0}r^{m}\text{ as } r\rightarrow0
\,\,\,(m=\mathrm{dim}\,M),
\]
for $k$ large enough
\[
vol\left(  B_{R/2^{k}}\left(  x_{0}\right)  \right)  ^{\alpha^{k}} \geq\left(
\frac12 \omega_{0} R ^{m } 2^{-km }\right)  ^{\alpha^{k}}
\]
and letting $k\rightarrow+\infty$ finally gives%
\[
vol\left(  B_{R}\left(  x_{0}\right)  \right)  \geq2^{-p\bar{\alpha}}\left(
2^{-p}S_{q,p}^{p}R^{p}\right)  ^{\frac{\alpha}{1-\alpha}},
\]
where%
\[
\bar{\alpha}=\sum_{j=1}^{+\infty}j\alpha^{j},
\]
and estimate (\ref{volume}) holds since $\frac{p \alpha}{1-\alpha} = \frac{pq}{q-p}$.

To prove the second statement, assume that $x_{0}\in E$ is such that
$d(x_{0},o)\geq R+R_{0}$, and consider the geodesic ball $B_{R}(x_{0})$. If $x
\in\overline{B_{R_{0}}(o)}$, then by the triangle inequality,
\[
d\left(  x_{0},x\right)  \geq d\left(  x_{0},o\right)  -d\left(  o,x\right)
\geq R,
\]
proving that $B_{R}\left(  \bar x\right)  \cap\overline{B_{R_{0}}\left(
o\right)  } =\emptyset$. On the other hand, if $E^{\prime}$ is a second
connected component of $M\backslash K$ and $x^{\prime\prime}\setminus
B_{R_{0}}(o)$, let $\sigma$ be a minimizing geodesic from $x_{0}$ \ to
$x^{\prime}$. By continuity, $\sigma$ must intersect $\partial B_{R_{0}%
}\left(  o\right)  $ at some point $x_{1}$ and
\[
d\left(  x_{0},x^{\prime}\right)  =\ell\left(  \sigma\right)  =d(x^{\prime
},x_{1})+ d(x_{1}, x_{0}) > d(x_{1},\bar x)\geq R.
\]
Therefore $B_{R}\left(  x_{0}\right)  \cap E^{\prime}=\emptyset$ and we
conclude that $B_{R}(x_{0})\subset E$. Since $x_{0}\in E$ can be chosen in
such a way that $d\left(  x_{0},o\right)  $ is arbitrarily large, letting
$E\ni x_{0}\rightarrow\infty$ gives that $vol\left(  E\right)  =+\infty$.
\end{proof}

We next prove that if an $L^{q,p}$-Sobolev inequality holds in the complement of
a compact set of a complete Riemannian manifold, then each end is $p$-hyperbolic.
The result is known for $p=2$ \cite{CSZ-MRL,LW-MRL,PRS-Progress}. The proof
we give here is new and is  based on the observation that if the
$L^{q,p}$ Sobolev inequality (\ref{lqp-sobolev}) holds on $M$ then $M$ is
necessarily $p$-hyperbolic. Indeed, if $\Omega$ is any compact domain then,
for every $\varphi\in C_{c}^{\infty}(M)  $ satisfying
$\varphi\geq1$ on $\Omega$ it holds%
\[
S_{q,p}\mathrm{vol}\left(  \Omega\right)  ^{1/q}\leq S_{q,p}\left\Vert
\varphi\right\Vert _{L^{q}}\leq\left\Vert \nabla\varphi\right\Vert _{L^{p}},
\]
proving that%
\[
\mathrm{cap}_{p}\left(  \Omega\right)  \geq S_{q,p}^{p}\mathrm{vol}\left(
\Omega\right)  ^{p/q}>0.
\]
This shows that $M$ is $p$-hyperbolic, and therefore at least one of its ends
is $p$-hyperbolic. To extend the conclusion to each end $E$ of $M$, we are
naturally led to applying the reasonings to the double $\mathcal{D}\left(
E\right)  $. By the very definition of the double of a manifold, it turns out
that $\mathcal{D}\left(  E\right)  $ supports the Sobolev inequality
(\ref{lqp-sobolev}) outside a compact neighborhood of the glued boundaries.
Accordingly, to conclude that $E$ is $p$-hyperbolic we can make a direct use
of the following very general theorem that extends to any $L^{q,p}$-Sobolev
inequality a previous result by Carron, \cite{C-Duke}.

\begin{theorem}
\label{th_hyp&sobolev}Let $\left(  M,g \right)  $ be
a possibly incomplete Riemannian manifold. Assume that $M$ has infinite volume
and that $M\setminus F$ supports the $L^{q,p}$-Sobolev inequality (\ref{lqp-sobolev}) for some
compact $F\subset M$.  Then, $M$ is $p$-hyperbolic and the same Sobolev
inequality, possibly with a different constant, holds on all of $M$.
\end{theorem}

\begin{remark}
Clearly, if $M$ is complete, according to Proposition \ref{th_volume} the assumption that
$M$ has infinite volume is automatically satisfied.

\end{remark}

\begin{proof}
Let $\Omega$ be a precompact domain with smooth boundary such that
$K\subset\subset\Omega$. Let also $W_{\varepsilon}\approx\partial\Omega
\times\left(  -\varepsilon,\varepsilon\right)  $ be a bicollar neighborhood of
$\partial\Omega$ such that $W_{\varepsilon}\subset M\backslash F$, and let
$\Omega_{\varepsilon}=\Omega\cup W_{\varepsilon}$ and $M_{\varepsilon
}=M\backslash\Omega_{\varepsilon}$. Note that, by assumption, the $L^{q,p}%
$-Sobolev inequality with Sobolev constant $S>0$ holds on $M_{\varepsilon}$.
Furthermore, the same $L^{q,p}$-Sobolev inequality, with some constant
$S_{\varepsilon}>0$ holds on the compact manifold with boundary $\overline
{\Omega_{\varepsilon}}$ (start with the Euclidean $L^{1}$ Sobolev inequality
and use H\"older's inequality a number of times). Now, let $\rho\in C_{c}^{\infty
}(M)  $ be a cut-off function satisfying $0\leq\rho\leq1$,
$\rho=1$ on $\Omega_{\varepsilon/2}$ and $\rho=0$ on $M_{\varepsilon}.$ Next,
for any $v\in C_{c}^{\infty}(M)  $, write $v=\rho v+\left(
1-\rho\right)  v$, and note that $\rho v\in C_{c}^{\infty}\left(
\Omega_{\varepsilon}\right)  $ whereas $\left(  1-\rho\right)  v\in
C_{c}^{\infty}\left(  M_{\varepsilon/2}\right)  $ \ Therefore, we can apply
the respective Sobolev inequalities and get%
\begin{align*}
\left\Vert v\right\Vert _{L^{q}(M)  }  &  \leq\left\Vert
v\rho\right\Vert _{L^{q}\left(  \Omega_{\varepsilon}\right)  }+\left\Vert
v\left(  1-\rho\right)  \right\Vert _{L^{q}\left(  M_{\varepsilon/2}\right)
}\\
&  \leq S_{\varepsilon}^{-1}\left\Vert \nabla\left(  v\rho\right)  \right\Vert
_{L^{p}\left(  \Omega_{\varepsilon}\right)  }+S^{-1}\left\Vert \nabla\left(
v\left(  1-\rho\right)  \right)  \right\Vert _{L^{p}\left(  M_{\varepsilon
/2}\right)  }\\
&  \leq\left(  S_{\varepsilon}^{-1}+S^{-1}\right)  \left\Vert \nabla
v\right\Vert _{L^{p}(M)  }+S_{\varepsilon}^{-1}\left\Vert
v\nabla\rho\right\Vert _{L^{p}\left(  \Omega_{\varepsilon}\backslash
\Omega_{\varepsilon/2}\right)  }+S^{-1}\left\Vert v\nabla\rho\right\Vert
_{L^{p}\left(  \Omega_{\varepsilon}\right)  }\\
&  \leq\left(  S_{\varepsilon}^{-1}+S^{-1}\right)  \left\{  \left\Vert \nabla
v\right\Vert _{L^{p}(M)  }+C\left\Vert v\right\Vert
_{L^{p}\left(  \Omega_{\varepsilon}\right)  }\right\}  ,
\end{align*}
where $C=\max_{M}\left\vert \nabla\rho\right\vert $. \ Summarizing, we have
shown that, for every $v\in C_{c}^{\infty}(M)  ,$%
\begin{equation}
\left\Vert v\right\Vert _{L^{q}(M)  }\leq C_{1}\left\{
\left\Vert \nabla v\right\Vert _{L^{p}(M)  }+\left\Vert
v\right\Vert _{L^{p}\left(  \Omega_{\varepsilon}\right)  }\right\}  ,
\label{hyp&sob1}%
\end{equation}
for a suitable constant $C_{1}>0$.

With this preparation, we now prove that $M$ is $p$-hyperbolic. To this end,
using the fact that $\mathrm{vol}(M)  =+\infty$, we choose a
compact set $\Omega^{\prime}\supset\Omega_{\varepsilon}$ satisfying%
\[
\mathrm{vol}\left(  \Omega^{\prime}\right)^{1/p}  \geq\left(  2C_{1}\right)
^{q}\mathrm{vol}\left(  \Omega_{\varepsilon}\right)  ^{q/p}.
\]
Thus, applying (\ref{hyp&sob1}) with a test function $v\in C_{c}^{\infty
}(M)  $ satisfying $v=1$ on $\Omega^{\prime}$, we deduce%
\[
\mathrm{vol}\left(  \Omega_{\varepsilon}\right)  \leq C_{1}^{-1}%
\mathrm{vol}\left(  \Omega^{\prime}\right)  ^{1/q}-\mathrm{vol}\left(
\Omega_{\varepsilon}\right)  ^{1/p}\leq\left\Vert \nabla v\right\Vert
_{L^{p}(M)  }.
\]
It follows that%
\[
\mathrm{cap}_{p}\left(  \Omega^{\prime}\right)  \geq\mathrm{vol}\left(
\Omega_{\varepsilon}\right)  >0,
\]
proving that $M$ is $p$-hyperbolic.

Finally, we show that the Sobolev inequalities on $\Omega_{\varepsilon}$ and
on $M_{\varepsilon}$ glue together. According to (\ref{hyp&sob1}) it suffices
to prove that there exists a suitable constant $E=E\left(  \Omega
_{\varepsilon}\right)  >0$ such that%
\[
\left\Vert v\right\Vert _{L^{p}\left(  \Omega_{\varepsilon}\right)  }\leq
E\left\Vert \nabla v\right\Vert _{L^{p}(M)  },
\]
for every $v\in C_{c}^{\infty}(M)  $. Since $M$ is $p$%
-hyperbolic, this latter inequality follows from Theorem \ref{th_parabcrit}
(ii).\bigskip
\end{proof}

We can now prove the result announced at the beginning of this section.

\begin{theorem}
\label{th_hypends} Every end of a complete Riemannian manifold $\left(
M,g \right)  $ supporting the $L^{q,p}$-Sobolev
inequality (\ref{lqp-sobolev}) for some $q>p\geq1$ is $p$-hyperbolic and, in particular, has
infinite volume.
\end{theorem}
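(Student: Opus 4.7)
My strategy is to establish $p$-hyperbolicity by contradiction, using the $p$-harmonic exhaustion of Lemma~\ref{lemma_pharmends} to manufacture a test function for the Sobolev inequality whose $L^{q}$-$L^{p}$ ratio diverges. The infinite-volume conclusion will then come essentially for free, since any end $E$ with $V(E)<\infty$ is automatically $p$-parabolic: the cutoff $\varphi_R\in C_c^\infty(\bar E)$ equal to $1$ on $\bar E\cap B_R$ and decreasing to zero on $\bar E\cap(B_{2R}\setminus B_R)$ satisfies
\[
\|\nabla\varphi_R\|_{L^p(E)}^p \;\le\; (C/R)^p\,V(E)\;\longrightarrow\; 0
\]
as $R\to\infty$, so $\mathrm{cap}_p(K,E)=0$ for every compact $K\subset\bar E$.

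Assume for contradiction that $E$ is $p$-parabolic. I would invoke the construction in the proof of Lemma~\ref{lemma_pharmends} combined with Remark~\ref{rm3}: the $p$-harmonic functions $h_i$ on $E_i:=E\cap D_i$ with $h_i\equiv 1$ on $\partial E$ and $h_i\equiv 0$ on $\partial D_i\cap E$ increase locally uniformly to the constant~$1$, while their $p$-energies, which are exactly the $p$-capacities of the condensers $(\partial E,\partial D_i\cap E)$ in $E_i$, decrease to the vanishing capacity $\mathrm{cap}_p(\partial E,E)=0$; in particular $\|\nabla h_i\|_{L^p(E)}^p\to 0$. To bring the Sobolev inequality to bear, I would glue the $h_i$'s to a fixed extension $\eta\in C_c^\infty(\overline{M\setminus E})$ with $\eta|_{\partial E}\equiv 1$ (such an $\eta$ exists because $\partial E$ is compact and smooth), setting
\[
\tilde h_i := \begin{cases} h_i & \text{on }\bar E,\\ \eta & \text{on }M\setminus E,\end{cases}
\]
where $h_i$ is continued by~$0$ on $\bar E\setminus E_i$. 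This $\tilde h_i$ is Lipschitz, compactly supported on $M$, and continuous across $\partial E$ (both sides take the value~$1$), so it is a legitimate test function for \eqref{lqp-sobolev}.

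Plugging $\tilde h_i$ into the Sobolev inequality yields
\[
S_{q,p}^{\,p}\,\|\tilde h_i\|_{L^q(M)}^{p} \;\le\; \|\nabla h_i\|_{L^p(E)}^p + \|\nabla\eta\|_{L^p(M\setminus E)}^p.
\]
As $i\to\infty$ the first term on the right vanishes; on the left, $\|\tilde h_i\|_{L^q(M)}^q\ge\|h_i\|_{L^q(E)}^q$ tends to $V(E)$ by monotone convergence (since $h_i\nearrow 1$). One therefore arrives at
\[
S_{q,p}^{\,p}\,V(E)^{p/q} \;\le\; \|\nabla\eta\|_{L^p(M\setminus E)}^p,
\]
which is an immediate contradiction whenever $V(E)=\infty$.

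The main obstacle is the residual case $V(E)<\infty$, where the displayed bound is merely finite. To close it, I would attempt to vary $\eta$: taking a minimizing sequence $\eta_n$ for $\mathrm{cap}_p(\partial E,M\setminus E)$, the same argument yields $S_{q,p}^{\,p}\,V(E)^{p/q}\le\mathrm{cap}_p(\partial E,M\setminus E)$. If that capacity vanishes—equivalently, if every other end of $M$ is $p$-parabolic, so that $M\setminus E$ is a $p$-parabolic manifold with compact boundary $\partial E$—we conclude $V(E)\le 0$, the contradiction. The delicate remaining case is when some other end of $M$ is already $p$-hyperbolic and the capacity is strictly positive; here one has to run the argument symmetrically across all ends, or combine the $h_i$'s on $E$ with the $p$-harmonic function on that hyperbolic end (also furnished by Lemma~\ref{lemma_pharmends}) to engineer a genuine violation of \eqref{lqp-sobolev}. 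This is the technical crux of the proof.
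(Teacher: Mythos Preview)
Your argument has a genuine gap that you yourself flag but do not close. The contradiction you derive,
\[
S_{q,p}^{\,p}\,V(E)^{p/q}\le \|\nabla\eta\|_{L^p(M\setminus E)}^p,
\]
only bites when $V(E)=\infty$; in the residual case $V(E)<\infty$ you are left with $S_{q,p}^{\,p}V(E)^{p/q}\le \mathrm{cap}_p(\partial E, M\setminus E)$, which you can drive to a contradiction only under the extra hypothesis that $M\setminus E$ is itself $p$-parabolic. When some other end is already $p$-hyperbolic you have no argument, and your closing sentence (``this is the technical crux of the proof'') is an admission, not a proof. Note too that you cannot bootstrap: you propose deducing $V(E)=\infty$ \emph{from} $p$-hyperbolicity, but your proof of $p$-hyperbolicity is precisely what is stuck when $V(E)<\infty$, so the logic is circular.

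The paper avoids this by reversing the order of the two conclusions. It first proves, independently and unconditionally, that every end has infinite volume: this is Proposition~\ref{th_volume} (Carron's uniform estimate), which shows that any geodesic ball $B_R(x_0)\subset E$ satisfies $\mathrm{vol}(B_R(x_0))\ge C_1 R^{C_2}$ purely from the Sobolev inequality restricted to $E$; since $E$ contains balls of arbitrarily large radius, $V(E)=\infty$. With infinite volume in hand, the paper then passes to the Riemannian double $\mathcal{D}(E)$, which has infinite volume and supports \eqref{lqp-sobolev} off a compact neighbourhood of the glued boundary, and invokes Theorem~\ref{th_hyp&sobolev} to conclude that $\mathcal{D}(E)$ is $p$-hyperbolic; by Theorem~\ref{th_parabcritends} this is equivalent to $E$ being $p$-hyperbolic. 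The doubling device is exactly what allows one to treat a single end in isolation, with no reference whatsoever to the other ends of $M$---the decoupling your approach cannot achieve.
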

\begin{proof}
Let $E$ be an end with smooth boundary of the complete manifold $M$
supporting the $L^{q,p}$-Sobolev inequality (\ref{lqp-sobolev}). We shall
prove that the double $\mathcal{D}\left(  E\right)  $ of $E$ is a
$p$-hyperbolic manifold (without boundary). To this purpose, we note that
$\mathcal{D}\left(  E\right)  $ has infinite volume because, by the first part
of Theorem \ref{th_hypends}, $E$ itself has infinite volume. Furthermore, $E$
enjoys the Sobolev inequality (\ref{lqp-sobolev}) outside a compact
neighborhood of the glued boundaries. Therefore, a direct application of
Theorem \ref{th_hyp&sobolev} yields that $\mathcal{D}\left(  E\right)  $ is a
$p$-hyperbolic manifold, as desired.
\end{proof}

\begin{corollary}
Suppose that the complete manifold $M$ has (at least) one $p$-parabolic end.
Then the $L^{q,p}$-Sobolev inequality (\ref{lqp-sobolev}) fails.
\end{corollary}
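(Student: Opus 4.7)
The corollary is the contrapositive of Theorem \ref{th_hypends}, so the plan is essentially a one-line reduction. I would argue by contradiction: suppose the $L^{q,p}$-Sobolev inequality \eqref{lqp-sobolev} holds on $M$. By Theorem \ref{th_hypends}, every end of $M$ must then be $p$-hyperbolic, which directly contradicts the hypothesis that $M$ possesses at least one $p$-parabolic end.

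Since Theorem \ref{th_hypends} has already been established (and, by the stated scope of the section, applies to every end without any extra regularity assumptions), there is essentially no work left to do. The only subtlety worth flagging explicitly is that the notion of parabolicity used in the hypothesis is Definition \ref{def_parends}, which is exactly the negation of the $p$-hyperbolicity conclusion delivered by Theorem \ref{th_hypends}; hence the contrapositive goes through verbatim with no compatibility issue between the two definitions. There is no real obstacle — the corollary is included as an immediate logical reformulation, useful for applications where one wants to rule out the Sobolev inequality from a single parabolic end rather than verify $p$-hyperbolicity of all ends simultaneously.
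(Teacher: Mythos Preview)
Your proposal is correct and matches the paper's approach: the corollary is stated immediately after Theorem~\ref{th_hypends} with no separate proof, since it is indeed nothing but the contrapositive. Your remark about the compatibility of Definition~\ref{def_parends} with the $p$-hyperbolicity conclusion is accurate and is the only point that could conceivably need checking.
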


\section{$p$-harmonic functions with finite $p$-energy}

This section aims to giving  a simple independent proof of a result of  I. Holopainen, \cite{Ho-Revista},
which extends to the nonlinear setting previous results of the Li-Tam theory, \cite{LT-JDG}.
Related results may be found in the paper by S.W. Kim, and Y.H. Lee, \cite{Kim-Kor}.

\begin{theorem}
\label{th_pharm}Let $\left(  M,g \right)  $ be a
Riemannian manifold with at least two $\ p$-hyperbolic ends (with respect to
some smooth, compact domain).Then, there exists a non-constant , bounded
$p$-harmonic function $u\in C^0(M)  \cap C_{loc}^{1,\alpha}\left(
M\right)  $ satisfying $\left\vert \nabla u\right\vert \in L^{p}\left(
M\right)  $.
\end{theorem}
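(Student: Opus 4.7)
The plan is to construct $u$ as a locally uniform limit, along a smooth exhaustion of $M$, of $p$-harmonic functions solving a Dirichlet problem whose boundary data are prescribed by a global ``upper barrier'' built from the model $p$-harmonic functions on the two hyperbolic ends; a matching ``lower barrier'' will then force the limit to be non-constant, and the barriers themselves will supply the $L^{p}$-energy control.

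Let $K$ be the compact smooth domain with respect to which $M$ has at least two $p$-hyperbolic ends $E_{1},E_{2}$. Applying Lemma~\ref{lemma_pharmends} on $E_{1}$ and on $E_{2}$ yields $p$-harmonic functions $h_{j}\in C(\overline{E_{j}})\cap C_{loc}^{1,\alpha}(E_{j})$ with $0<h_{j}\leq 1$, $h_{j}\equiv 1$ on $\partial E_{j}$, $\inf_{E_{j}}h_{j}=0$, and $|\nabla h_{j}|\in L^{p}(E_{j})$. Define the global barriers
\[
U=\begin{cases} h_{1} & \text{on } \overline{E_{1}},\\ 1 & \text{on } M\setminus E_{1},\end{cases}\qquad V=\begin{cases} 1-h_{2} & \text{on } \overline{E_{2}},\\ 0 & \text{on } M\setminus E_{2}.\end{cases}
\]
Both are continuous on $M$, satisfy $0\leq V\leq U\leq 1$, and have $|\nabla U|,|\nabla V|\in L^{p}(M)$ by construction. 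Because each $h_{j}$ attains its maximum on $\partial E_{j}$, the standard pasting lemma for $p$-super(sub)harmonic functions shows that $U$ is $p$-superharmonic and $V$ is $p$-subharmonic on all of $M$.

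Fix a smooth exhaustion $\{D_{i}\}$ of $M$ with $K\subset D_{1}$ and, for each $i$, solve
\[
\Delta_{p}w_{i}=0 \text{ in } D_{i},\qquad w_{i}=U \text{ on } \partial D_{i},
\]
obtaining $w_{i}\in C(\overline{D_{i}})\cap C_{loc}^{1,\alpha}(D_{i})$ by Tolksdorf. Since $V\leq U=w_{i}\leq U$ on $\partial D_{i}$, the comparison principle gives $V\leq w_{i}\leq U$ on $D_{i}$, and the variational characterization of $w_{i}$ as the minimizer of $\int_{D_{i}}|\nabla\cdot|^{p}$ among $W^{1,p}$-functions with trace $U|_{\partial D_{i}}$ yields the uniform energy bound
\[
\int_{D_{i}}|\nabla w_{i}|^{p}\leq \int_{D_{i}}|\nabla U|^{p}\leq \int_{M}|\nabla U|^{p}<\infty.
\]

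Finally, the uniform bound $0\leq w_{i}\leq 1$ combined with Tolksdorf's local $C^{1,\alpha}$ a priori estimates produces a subsequence converging in $C^{1,\alpha}_{loc}(M)$ to a $p$-harmonic $u\in C(M)\cap C^{1,\alpha}_{loc}(M)$ with $V\leq u\leq U$. The function $u$ is non-constant: on $E_{1}$ one has $u\leq h_{1}$ with $\inf_{E_{1}}h_{1}=0$, while on $E_{2}$ one has $u\geq 1-h_{2}$ with $\sup_{E_{2}}(1-h_{2})=1$. Weak $L^{p}$-convergence $\nabla w_{i}\rightharpoonup \nabla u$ on relatively compact sets together with lower semicontinuity of the $L^{p}$-norm gives $\int_{M}|\nabla u|^{p}\leq \int_{M}|\nabla U|^{p}<\infty$, completing the construction. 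The main obstacle, where the nonlinearity of $\Delta_{p}$ really intervenes, is the pasting step showing that $U$ and $V$ remain $p$-super/subharmonic across $\partial E_{1},\partial E_{2}$; the key input is the Hopf-type sign of the boundary flux, which is forced by the fact that each $h_{j}$ achieves its maximum along $\partial E_{j}$.
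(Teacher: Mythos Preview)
Your argument is correct and follows the same overall scheme as the paper (solve Dirichlet problems along an exhaustion, compare, pass to the limit), but the implementation differs in two places. First, the paper prescribes \emph{piecewise constant} boundary data $u_{t}=1$ on $E_{1}\cap\partial D_{t}$ and $u_{t}=0$ on the remaining boundary pieces, and then compares $u_{t}$ with the approximating equilibrium potentials $h_{j,t}$ of Lemma~\ref{lemma_pharmends}; you instead take the global barrier $U$ as boundary datum and compare directly with $U$ and $V$. Second, for the energy bound the paper interprets $u_{t}$ as the equilibrium potential of a condenser and invokes the monotonicity of $p$-capacity to dominate $\int|\nabla u_{t}|^{p}$ by $\int_{E_{1}}|\nabla k_{1,t}|^{p}$, whereas you get the bound in one line from the Dirichlet-minimizing property of $w_{i}$ with $U$ as competitor. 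Your route is slightly more economical on the energy side; the paper's route avoids having to check that $U,V$ are globally $p$-super/subharmonic. On that last point, your closing remark about a ``Hopf-type sign of the boundary flux'' is a red herring: the standard pasting lemma for $p$-superharmonic functions (e.g.\ \cite{HKM}) needs only that $h_{j}\le 1$ with equality on $\partial E_{j}$, so that $U=\min(1,h_{1})$ on $E_{1}$ glued to $1$ outside is lower semicontinuous; no $C^{1}$-up-to-the-boundary regularity or boundary-flux sign is required.
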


\begin{proof}
Let $E_{i}$ be the ends of $M$ with respect to the smooth domain
$\Omega\subset\subset M$. By assumption, we may suppose that  $E_{1}$ and $E_{2}$ are
$p$-hyperbolic. Let $\left\{  D_{t}\right\}
_{t\in\mathbb{N}}$ \ be a smooth exhaustion of $M$ and set $E_{j,t}=E_{j}\cap
D_{t}$.

For every $t\in\mathbb{N}$, let $u_{t}\in C_{loc}^{1,\alpha}\left(
D_{t}\right)  \cap C\left(  \overline{D_{t}}\right)  $ be the solution of the
Dirichlet problem%
\[
\left\{
\begin{array}
[c]{ll}%
\Delta_{p}u_{t}=0 & \text{on }D_{t}\\
u_{t}=1 & \text{on }E_{1}\cap\partial D_{t}\\
u_{t}=0 & \text{on }E_{j}\cap\partial D_{t}\text{, }j\neq1.
\end{array}
\right.
\]
Note that, by the strong maximum principle, $0<u_{t}<1$ in $D_{t}$. Moreover,
as explained in Lemma \ref{lemma_pharmends}, the sequence $\left\{
u_{t}\right\}  _{t\in\mathbb{N}}$ converges, locally uniformly, to a
$p$-harmonic function $u\in C^0(M)  \cap C_{loc}^{1,\alpha}\left(
M\right)  $ satisfying $0\leq u\leq1$. Now, for every $j=1,2$, let $h_{j}$
be the $p$-harmonic function associated to the ends $E_{j}$ costructed in
Lemma \ref{lemma_pharmends}. Recall that $h_{j}$ is the (locally uniform)
limit of the $p$-harmonic function $h_{j,t}$ which satisfy $h_{j,t}=1$ on
$\partial E_{j}$ and $h_{j,t}=0$ on $E_{j}\cap\partial D_{t}$. Define
$k_{1,t}=1-h_{1,t}$. Then, comparing $u_{t}$ and $k_{1,t}$ on $E_{1,t}$ yields
that $u_{t}\geq k_{1,t}$ on $E_{1,t}$. On the other hand, comparing $u_{t}$
and $h_{2,t}$, gives $u_{t}\leq h_{2,t}$ on $E_{2,t}$.
Therefore, taking limits as $t\rightarrow+\infty$, we deduce that $u\geq
h_{1}$ on $E_{1}$ and $u\leq k_{2}$ on $E_{2}$. From this, using
(3) in Lemma \ref{lemma_pharmends}, we conclude that $u$ is non-constant. We
claim that $\left\vert \nabla u\right\vert \in L^{p}(M)  $.
Indeed, for every \thinspace$j=1,...,n$, let $F_{j,t}=E_{j}\backslash E_{j,t}
.$ We think of $u_{t}$ as extended to all
of $M$ by $u_{t}=1$ on $F_{1,t}$ and $u_{t}=0$ on $\cup_{i=2}^{n}F_{i,t}$.
Then, by construction, $u_{t}$ is the equilibrium potential of the condenser
$\left(  F_{1,t},\cup_{i\geq 2}E_{i,t}\cup \Omega\cup E_{1} \right)  $ and we have%
\[
\mathrm{cap}_{p}\left(  F_{1,t},\cup_{i\geq 2}E_{i,t}\cup \Omega\cup E_{1} \right)
=\int_{M}\left\vert \nabla u_{t}\right\vert ^{p}.
\]
On the other hand, take $k_{1,t}$ and extend it to be one on $F_{1,t}$.
Then, $k_{1,t}$ is the equilibrium potential of the
condenser $\left(  F_{1,t},E_{1}\right)  $ and we have%
\[
\mathrm{cap}_{p}\left(  F_{1,t}, E_{1}\right)  =\int_{M}\left\vert
\nabla k_{1,t}\right\vert ^{p}.
\]
By the monotonicity properties of the $p$-capacity, \cite{HKM},
\cite{GT-capacity}, and recalling that $\int_{E_{1,t}}\left\vert \nabla
k_{1,t}\right\vert ^{p}$ is decreasing in $t$, we deduce%
\begin{equation*}
\begin{split}
\int_{M}\left\vert \nabla u_{t}\right\vert ^{p}&=\mathrm{cap}_p\left( F_{1,t}, \cup_{i\geq 2}E_{i,t}\cup \Omega\cup E_{1}\right)  \\
& \leq
\mathrm{cap}_p\left( F_{1,t}, E_1\right)
=
\int_{M}\left\vert \nabla k_{1,t}\right\vert ^{p}=\int_{E_{1,t}}\left\vert \nabla k_{1,t}\right\vert
^{p}\leq C,
\end{split}
\end{equation*}
for some constant $C>0$ independent of $t$. Now observe that, for every domain
$D\subset\subset M$, $\nabla u_{t}\rightarrow\nabla u$ weakly in $L^{p}\left(
D\right)  $ and therefore%
\[
\int_{D}\left\vert \nabla u\right\vert ^{p}\leq\liminf_{t\rightarrow+\infty
}\int_{D}\left\vert \nabla u_{t}\right\vert ^{p}\leq C.
\]
Letting $D\nearrow M$ completes the proof.
\end{proof}

\section{A Liouville-type result for $p$-harmonic functions}

The project of a self-contained proof of Theorem \ref{th_main} will be completed
once we have proved the following Liouville-type result for $p$-harmonic
function with finite $p$-energy.

\begin{theorem}
\label{thm_liouville} Let $\left(  M,g\right)  $ be complete Riemanian
manifold such that $^{M}\Ric\geq-q\left(  x\right)  $ for some continuous
function $q\left(  x\right)  \geq0$. Let $p\geq2$ and assume that the
Schrodinger operator $L_{H}=-\Delta-Hq\left(  x\right)  $ satisfies%
\[
\lambda_{1}^{L_{H}}(M)\geq0
\]
for some $H>p^{2}/4\left(  p-1\right)  $. Then, every $p$-harmonic function
$u:M\rightarrow\mathbb{R}$ of class $C^{1}$ and with finite $p$-energy
$\left\vert \nabla u\right\vert \in L^{p}(M)$ must be constant.
\end{theorem}

Note that the spectral condition is equivalent to the strong positivity of the
operator
\[
-\Delta-\frac{s^{2}}{4\left(  s-1\right)  }q(x)
\]
in the terminology of \cite{CouhlonZhang}.\medskip

In the recent paper \cite{PV-Geom}, the authors obtained a more general result
for manifold-valued $p$-harmonic maps with low regularity. The proof in the
real-valued case of Theorem \ref{thm_liouville} \ appears somewhat more direct.

\begin{proof}
Roughly speaking, the idea is to obtain a Caccioppoli-type inequality for the
energy density $\left\vert \nabla u\right\vert $ of $u$ and this is achieved
by integrating the Bochner formula against suitable test functions.

Note that, by elliptic regularity, $u$ is smooth on the open set%
\[
M_{+}=\left\{  x\in M:\left\vert \nabla u\right\vert \neq0\right\}  .
\]
The standard Bochner formula, which is valid for a generic smooth function,
states that
\[
\frac{1}{2}\Delta\left\vert \nabla u\right\vert ^{2}=\left\vert \Hess\left(
u\right)  \right\vert ^{2}+\left\langle \nabla\Delta u,\nabla u\right\rangle
+ \Ric\left(  \nabla u,\nabla u\right)  ,\text{ on }M_{+}.
\]
Computing the Laplacian on the left hand side, using the Kato inequality
\[
\left\vert \nabla\left\vert \nabla u\right\vert \right\vert ^{2}\leq\left\vert
\Hess (u) \right\vert ^{2}%
\]
and recalling that $\Ric\geq-q\left(  x\right)  $, we deduce
\begin{equation}
\left\vert \nabla u\right\vert \Delta\left\vert \nabla u\right\vert
\geq\left\langle \nabla\Delta u,\nabla u\right\rangle -q\left(  x\right)
\left\vert \nabla u\right\vert ^{2},\text{ on }M_{+}. \label{liouville1}%
\end{equation}
Let $0\leq\rho\in C_{c}^{\infty}\left(  M_{+}\right)  $ be a test function. We
multiply both sides of (\ref{liouville1}) by $\rho^{2}\left\vert \nabla
u\right\vert ^{p-2}$ and we integrate by parts thus obtaining%
\begin{align}
-\int_{M_{+}}\left\langle \nabla\left(  \left\vert \nabla u\right\vert
^{p-1}\rho^{2}\right)  ,\nabla\left\vert \nabla u\right\vert \right\rangle  &
\geq-\int_{M_{+}}\Delta u\operatorname{div}\left(  \rho^{2}\left\vert \nabla
u\right\vert ^{p-2}\nabla u\right) \label{liouville2}\\
&  -\int_{M_{+}}q\left(  x\right)  \rho^{2}\left\vert \nabla u\right\vert
^{p}.\nonumber
\end{align}
We shall take care of each of the integrals in (\ref{liouville2}) separately.

(I) Direct computations and the Cauchy-Schwarz inequality show that%
\begin{align}
-\int_{M_{+}}\left\langle \nabla\left(  \left\vert \nabla u\right\vert
^{p-1}\rho^{2}\right)  ,\nabla\left\vert \nabla u\right\vert \right\rangle  &
\leq2\int_{M_{+}}\rho\left\vert \nabla\rho\right\vert \left\vert \nabla
u\right\vert ^{p-1}\left\vert \nabla\left\vert \nabla u\right\vert \right\vert
\label{liouville3}\\
&  -\left(  p-1\right)  \int_{M_{+}}\rho^{2}\left\vert \nabla u\right\vert
^{p-2}\left\vert \nabla\left\vert \nabla u\right\vert \right\vert
^{2}.\nonumber
\end{align}
Let $\varepsilon>0$ be any small number. Using the elementary inequality
$2ab\leq\varepsilon^{2}a^{2}+\varepsilon^{-2}b^{2}$ we obtain%
\begin{align*}
\int_{M_{+}}2\rho\left\vert \nabla\rho\right\vert \left\vert \nabla
u\right\vert ^{p-1}\left\vert \nabla\left\vert \nabla u\right\vert
\right\vert  &  \leq\varepsilon^{2}\int_{M_{+}}\rho^{2}\left\vert \nabla
u\right\vert ^{p-2}\left\vert \nabla\left\vert \nabla u\right\vert \right\vert
^{2}\\
&  +\varepsilon^{-2}\int_{M_{+}}\left\vert \nabla\rho\right\vert
^{2}\left\vert \nabla u\right\vert ^{p},
\end{align*}
which, inserted into (\ref{liouville3}), yields%
\begin{align}
-\int_{M_{+}}\left\langle \nabla\left(  \left\vert \nabla u\right\vert
^{p-1}\rho^{2}\right)  ,\nabla\left\vert \nabla u\right\vert \right\rangle  &
\leq\varepsilon^{-2}\int_{M_{+}}\left\vert \nabla\rho\right\vert
^{2}\left\vert \nabla u\right\vert ^{p}\label{liouville3a}\\
&  +\left(  \varepsilon^{2}-\left(  p-1\right)  \right)  \int_{M_{+}}\rho
^{2}\left\vert \nabla u\right\vert ^{p-2}\left\vert \nabla\left\vert \nabla
u\right\vert \right\vert ^{2}.\nonumber
\end{align}

(II) Again, by direct computations,%
\begin{align}
-\int_{M_{+}}\Delta u\operatorname{div}\left(  \rho^{2}\left\vert \nabla
u\right\vert ^{p-2}\nabla u\right)   &  =-\int_{M_{+}}\rho^{2}\Delta
u\Delta_{p}u\label{liouville4}\\
&  -2\int_{M_{+}}\rho\Delta u\left\vert \nabla u\right\vert ^{p-2}\left\langle
\nabla\rho,\nabla u\right\rangle .\nonumber
\end{align}
Now, since $u$ is $p$-harmonic, $\Delta_{p}u=0$ and, therefore, the first
summand on the right hand  side  vanishes. On the other hand, expanding the $p$-harmonicity
condition we see that
\[
\Delta u=-\left(  p-2\right)  \left\vert \nabla u\right\vert ^{-1}\left\langle
\nabla\left\vert \nabla u\right\vert ,\nabla u\right\rangle \text{, on }%
M_{+}.
\]
Replacing this expression into (\ref{liouville4}) and manipulating as above,
we conclude%
\begin{align}
\text{LHS}(\ref{liouville4})  &  =2\left(  p-2\right)  \int_{M_{+}}%
\rho\left\vert \nabla u\right\vert ^{-1}\left\langle \nabla\left\vert \nabla
u\right\vert ,\nabla u\right\rangle \left\vert \nabla u\right\vert
^{p-2}\left\langle \nabla\rho,\nabla u\right\rangle \label{liouville5}\\
&  \geq-2\left(  p-2\right)  \int_{M_{+}}\rho\left\vert \nabla\rho\right\vert
\left\vert \nabla u\right\vert ^{p-1}\left\vert \nabla\left\vert \nabla
u\right\vert \right\vert \nonumber\\
&  \geq-\varepsilon^{2}\left(  p-2\right)  \int_{M_{+}}\rho^{2}\left\vert
\nabla u\right\vert ^{p-2}\left\vert \nabla\left\vert \nabla u\right\vert
\right\vert ^{2}-\varepsilon^{-2}\int_{M_{+}}\left\vert \nabla\rho\right\vert
^{2}\left\vert \nabla u\right\vert ^{p}.\nonumber
\end{align}

(III) Recall that, by the spectral assumption,%
\[
\int_{M}\left\vert \nabla\varphi\right\vert ^{2}-Hq\left(  x\right)
\varphi^{2}\geq0,
\]
for every $\varphi\in C_{c}^{\infty}\left(  M\right)  $. Taking $\varphi
=\rho\left\vert \nabla u\right\vert ^{p/2}$ and performing the needed
computations as above, we finally obtain%
\begin{align}
-\int_{M_{+}}q\left(  x\right)  \rho^{2}\left\vert \nabla u\right\vert ^{p}
&  \geq-\left(  H^{-1}+\varepsilon^{-2}H^{-1}p\right)  \int_{M_{+}}\left\vert
\nabla\rho\right\vert ^{2}\left\vert \nabla u\right\vert ^{p}%
\label{liouville6}\\
&  -\left(  \frac{p^{2}}{4}H^{-1}+\varepsilon^{2}H^{-1}p\right)  \int_{M_{+}%
}\rho^{2}\left\vert \nabla u\right\vert ^{p-2}\left\vert \nabla\left\vert
\nabla u\right\vert \right\vert ^{2}.\nonumber
\end{align}

Inserting (\ref{liouville3a}), (\ref{liouville5}) and
(\ref{liouville6}) into (\ref{liouville2}) we  conclude that
\begin{equation}
A\int_{M_{+}}\rho^{2}\left\vert \nabla u\right\vert ^{p-2}\left\vert
\nabla\left\vert \nabla u\right\vert \right\vert ^{2}\leq B\int_{M_{+}%
}\left\vert \nabla\rho\right\vert ^{2}\left\vert \nabla u\right\vert ^{p},
\label{liouville7}%
\end{equation}
where we have set%
\begin{align*}
A  &  =p-1-\frac{p^{2}}{4}H^{-1}-\varepsilon^{2}\left\{   p-1
+H^{-1}p\right\} \\
B  &  =H^{-1}+\varepsilon^{-2}\left\{  H^{-1}p+2\right\}  .
\end{align*}
Note that, by the assumption on $H$, $A>0$ provided $0<\varepsilon<<1$.
Inequality (\ref{liouville7}) is almost the desired Caccioppoli-type
inequality. The main problem to complete the argument, and to deduce the
vanishing of $\left\vert \nabla u\right\vert $ by a standard choice of the
cut-off functions, is that $\rho$ must be supported in $M_{+}$. We need to
extend the validity of (\ref{liouville7}) to any test function compactly
supported in $M$. To this end, we use a  trick introduced by F.
Duzaar and M. Fucks in \cite{DF}. Namely, we define
\[
\varphi_{\delta}=\min\left\{  \frac{|du|^{p/2}}{\delta},1\right\}
\]
for $\delta>0$ and set $\xi=\varphi_{\delta}\eta$ for any $\eta
\in\ C_{c}^{\infty}\left(  M\right)  $. Using the fact that $f\left(
t\right)  =t^{p/2}$ is a Lipshcitz function for $p\geq2$ and that, for a
$p$-harmonic function, $|\nabla u|^{p/2-1}\nabla u\in W_{loc}^{1,2}(M)$
(see e.g. \cite{DF}) it can be verified that $\xi\in W_{0}^{1,2}(M_{+})$. Hence there
exists a sequence $\left\{  \rho_{j}\right\}  _{j=1}^{\infty}\subset
C_{c}^{\infty}(M_{+})$ such that $\rho_{j}\rightarrow\xi$ in $W_{0}^{1,2}(M)$.
Substituting $\rho=\rho_{j}$ into (\ref{liouville7}) and taking the liminf as
$j\rightarrow\infty$, we get%
\begin{align}
A\int_{M_{+}}\eta^{2}(\varphi_{\delta})^{2}\left\vert \nabla u\right\vert
^{p-2}\left\vert \nabla\left\vert \nabla u\right\vert \right\vert ^{2}  &
\leq2B\int_{M_{+}}\eta^{2}\left\vert \nabla\varphi_{\delta}\right\vert
^{2}\left\vert \nabla u\right\vert ^{p}\label{caccioppoli_p2_ve}\\
&  +2B\int_{M_{+}}\left(  \varphi_{\delta}\right)  ^{2}\left\vert
\nabla\eta\right\vert ^{2}\left\vert \nabla u\right\vert ^{p}\nonumber
\end{align}
Finally, we let $\delta\rightarrow0$. Note that $\varphi_{\delta
}\rightarrow1$ pointwise in $M_{+}$. Moreover
\begin{align*}
\int_{M_{+}}\left\vert \nabla u\right\vert ^{p}\left\vert \nabla
\varphi_{\delta}\right\vert ^{2}\eta^{2}  &  =\int_{M_{+}}\left\vert
\nabla u\right\vert ^{2}\frac{\left\vert \nabla\left\vert \nabla u\right\vert
^{p/2}\right\vert ^{2}}{\delta^{2}}\eta^{2}\chi_{\left\{  \left\vert
\nabla u\right\vert ^{p}<\delta^{2}\right\}  }\\
&  \leq\int_{M_{+}}\left\vert \nabla\left\vert \nabla u\right\vert
^{p/2}\right\vert ^{2}\eta^{2}\chi_{\left\{  \left\vert \nabla u\right\vert
^{p}<\delta^{2}\right\}  }%
\end{align*}
and the last term vanishes by dominated convergence as $\delta
\rightarrow0$. Therefore, letting $\delta\rightarrow0$ in
(\ref{caccioppoli_p2_ve}), we finally get the desired Caccioppoli inequality
\begin{equation}
\int_{M_{+}}\eta^{2}\left\vert \nabla u\right\vert ^{p-2}\left\vert
\nabla\left\vert \nabla u\right\vert \right\vert ^{2}\leq C\int_{M_{+}%
}\left\vert \nabla u\right\vert ^{p}\left\vert \nabla\eta\right\vert
^{2},\qquad\forall\eta\in C_{c}^{\infty}(M), \label{caccioppoli}%
\end{equation}
for a suitable constant $C>0$.

As mentioned above, the argument can now be easily completed. By contradiction, suppose $u$ is non-constant. For any fixed
$R>0$, we choose $\eta\left(  x\right)  =\eta_{R}\left(  x\right)  $ so to
satisfy%
\begin{equation}%
\begin{array}
[c]{ll}%
\text{(a) }0\leq\eta\left(  x\right)  \leq1\text{,} & \text{ (b) }\eta\left(
x\right)  =1\text{ on }B_{R}\left(  o\right)  \text{,}\\
\text{(c) }\eta\left(  x\right)  =0\text{ off }B_{2R}\left(  o\right)
\text{,} & \text{(d) }\left\vert \nabla\eta\right\vert \leq2/R\text{ on }M.
\end{array}
\label{cut-off}%
\end{equation}
Whence, we deduce%
\[
\int_{B_{R}\left(  o\right)  \cap M_{+}}\left\vert \nabla u\right\vert
^{p-2}\left\vert \nabla\left\vert \nabla u\right\vert \right\vert ^{2}%
\leq\frac{4C}{R^{2}}\int_{B_{2R}\left(  o\right)  \cap M_{+}}\left\vert \nabla
u\right\vert ^{p},
\]
for some computable positive constant $C$, and letting $R\rightarrow+\infty$
we conclude%
\[
\int_{M_{+}}\left\vert \nabla u\right\vert ^{p-2}\left\vert \nabla\left\vert
\nabla u\right\vert \right\vert ^{2}=0\text{,}%
\]
proving that $\left\vert \nabla u\right\vert =const.$ on every connected
component of $M_{+}$. Since $u$ is non-constant, this implies that $M_{+}=M$
and $|\nabla u|=const.\neq0$. Since, by assumption, $\left\vert
\nabla u\right\vert ^{p}\in L^{1}\left(  M\right)  $, we deduce that
\begin{equation}
\operatorname{vol}M<+\infty. \label{vol}%
\end{equation}
Using this information together with the spectral assumption and choosing
$\eta=\eta_{R}$ to be the cut-off functions defined in (\ref{cut-off}), we get%
\begin{align*}
0  &  \leq\lim_{R\rightarrow+\infty}\int_{B_{2R}\left(  o\right)  }\left\{
H^{-1}\left\vert \nabla\eta\right\vert ^{2}-q\left(  x\right)  \eta
^{2}\right\} \\
&  \leq\lim_{R\rightarrow+\infty}\left\{  \frac{4\;\operatorname{vol}%
B_{2R}\left(  o\right)  }{HR^{2}}-\int_{B_{R}\left(  o\right)  }q\left(
x\right)  \right\} \\
&  =-\int_{M}q\left(  x\right)  \leq0,
\end{align*}
proving that $q\left(  x\right)  =0$, i.e., $^{M}\Ric\geq0$. A well known
result by S.T. Yau and E. Calabi now shows that $M$ has at least a linear
volume growth, contradicting (\ref{vol}).
\end{proof}


\end{document}